\newcommand{\cB}{\mathcal{B}}
\newcommand{\bC}{\mathbf{C}}
\newcommand{\fC}{\mathfrak{C}}
\newcommand{\bN}{\mathbf{N}}
\newcommand{\cS}{\mathcal{S}}
\newcommand{\fS}{\mathfrak{S}}
\newcommand{\bV}{\mathbf{V}}
\newcommand{\bW}{\mathbf{W}}
\newcommand{\bZ}{\mathbf{Z}}
\newcommand{\be}{\mathbf{e}}
\newcommand{\bk}{\mathbf{k}}
\renewcommand{\phi}{\varphi}
\renewcommand{\emptyset}{\varnothing}
\newcommand{\injects}{\hookrightarrow}
\newcommand{\surjects}{\twoheadrightarrow}
\newcommand{\ol}[1]{\overline{#1}}
\DeclareMathOperator{\End}{End}
\newcommand{\GL}{\mathbf{GL}}
\newcommand{\lO}{\mathbf{O}}
\newcommand{\Spin}{\mathbf{Spin}}
\newcommand{\Pin}{\mathbf{Pin}}
\newcommand{\fgl}{\mathfrak{gl}}
\newcommand{\fso}{\mathfrak{so}}
\numberwithin{equation}{subsection}
\newtheorem{theorem}[equation]{Theorem}
\newtheorem{lemma}[equation]{Lemma}
\newtheorem{corollary}[equation]{Corollary}
\theoremstyle{definition}
\newtheorem{rmk}[equation]{Remark}
\newenvironment{remark}[1][]{\begin{rmk}[#1] \pushQED{\qed}}{\popQED \end{rmk}}
\newtheorem{eg}[equation]{Example}
\newenvironment{example}[1][]{\begin{eg}[#1] \pushQED{\qed}}{\popQED \end{eg}}
\newtheorem{defn}[equation]{Definition}
\newenvironment{definition}[1][]{\begin{defn}[#1]\pushQED{\qed}}{\popQED \end{defn}}
\newcommand{\SB}{{\rm \bf SB}}
\begin{document}

\title{The spin-Brauer diagram algebra}

%\thanks{Thanks to Steven V Sam for his constant guidance and helpful conversations. This work was supported by NSF grant DMS-1502553.}

\author{Robert P. Laudone
}

%\authorrunning{Short form of author list} % if too long for running head

\address{Department of Mathematics, University of Wisconsin, Madison}
\email{laudone@wisc.edu \newline \indent {\em URL:} \url{https://www.math.wisc.edu/~laudone/}}
\date{\today}

\maketitle

\begin{abstract}
We investigate the spin-Brauer diagram algebra, denoted $\SB_n(\delta)$, that arises from studying an analogous form of Schur-Weyl duality for the action of the pin group on $\bV^{\otimes n} \otimes \Delta$. Here $\bV$ is the standard $N$-dimensional complex representation of $\Pin(N)$ and $\Delta$ is the spin representation. When $\delta = N$ is a positive integer, we define a surjective map $\SB_n(N) \surjects \End_{\Pin(N)}(\bV^{\otimes n} \otimes \Delta)$ and show it is an isomorphism for $N \geq 2n$.  We show $\SB_n(\delta)$ is a cellular algebra and use cellularity to characterize its irreducible representations.
\keywords{Schur-Weyl duality \and Diagram algebras \and Algebraic combinatorics \and Representation theory \and Semisimple Lie groups and their representations.}
% \PACS{PACS code1 \and PACS code2 \and more}
% \subclass{MSC code1 \and MSC code2 \and more}
\end{abstract}

\section{Introduction} \label{introduction}
Schur-Weyl duality is a seminal result in representation theory. It states that the actions of $\Sigma_n$ and $\GL(N)$ on $\bV^{\otimes n}$ generate each others' commutators.  Here $\Sigma_n$ is the symmetric group on $n$ letters and $\bV$ is the standard representation. 

In \cite{brauer1937algebras}, Brauer pursued an analogous result to Schur-Weyl duality, replacing the general linear group with the orthogonal group.  Using invariant theory, he proved that the Brauer diagram algebra, denoted $\mathbf{B}_{n}(\delta)$, surjects onto $\End_{\lO(N)}(\bV^{\otimes n})$. Here $\delta$ is the dimension of $\bV$.  These diagram algebras, however, are well defined for any parameter $\delta$ and over any commutative ring $R$.  Brauer proved this in his paper by giving a purely combinatorial description of multiplication in $\mathbf{B}_{n}(\delta)$. He went on to show $\mathbf{B}_n(\delta)$ possessed certain properties, but questions about its semi-simplicity and irreducible representations were not well understood until recently.

In \cite{koike2005spin}, Koike pursued another analogue of Schur-Weyl duality, replacing $\lO(N)$ with its double cover $\Pin(N)$. He constructed a diagram algebra, and proved that it surjects onto $\End_{\Pin(N)}(\bV^{\otimes n} \otimes \Delta)$ and is a bijection for $N$ sufficiently large with $n$ fixed.  Here $\bV$ is the standard representation of the orthogonal group and $\Delta$ is the spin representation of $\Pin(N)$.  As in the above example, studying a centralizer algebra gave rise to a diagram algebra. However, in this case Koike showed some diagram algebra surjects onto the centralizer and is a bijection for sufficiently large $N$, but never gave an explicit combinatorial structure to this diagram algebra and never proved the map was a homomorphism. Without this structure, this version of Schur-Weyl duality was largely incomplete.

Accordingly, the purpose of this paper is to complete this form of Schur-Weyl duality by defining and studying an associative algebra $\SB_n(\delta)$ equipped with a purely combinatorial multiplication structure which we prove is equivalent to the diagram algebra Koike vaguely defines. The existence of additional equivariant maps to and from $\Delta$ complicates the composition structure of $\End_{\Pin(N)}(\bV^{\otimes n} \otimes \Delta)$.  Consequently, unlike other diagram algebras mentioned in \S \ref{relation}, defining a multiplication structure on $\SB_n(\delta)$ is not a straight-forward extension of multiplication in ${\bf B}_n(\delta)$ or any other diagram algebra. For example, one may can note that the basis of diagrams that arise for $\SB_n(\delta)$ is naturally contained in the diagrammatic basis for the partition algebra. However, the multiplication structure we define does not make $\SB_n(\delta)$ a subalgebra.

In section \ref{Diagram Algebra}, we provide this purely combinatorial description of $\SB_n(\delta)$. In particular, we give a multiplication structure on its basis elements and prove this structure is associative. This allows us to define the spin-Brauer diagram algebra over any commutative ring, with $n \in \bZ_{\geq 0}$ and for any parameter $\delta$. We then prove our main theorem:

\begin{theorem} \label{TheoremA}
For $n,N \in \bZ^+$, $\SB_n(N)$ surjects onto the centralizer algebra $\End_{\Pin(N)}(\bV^{\otimes n} \otimes \Delta)$ and for $N \geq 2n$ the map in (\ref{mapconvert}) is an isomorphism.
\end{theorem}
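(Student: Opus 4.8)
The overall strategy is standard for Schur--Weyl dualities: establish that the map $\Phi$ of (\ref{mapconvert}) is an algebra homomorphism, prove surjectivity for all $N$ by a First Fundamental Theorem argument, and prove injectivity in the stable range $N\ge 2n$ by a dimension count. Recall from \S\ref{Diagram Algebra} that a basis diagram of $\SB_n(N)$ is a Brauer-type partial pairing of the $2n$ ``vector vertices'' (the $\bV$-factors of the source and target) together with one distinguished block of ``spin vertices'', and that $\Phi$ sends such a diagram to the $\Pin(N)$-equivariant operator on $\bV^{\otimes n}\otimes\Delta$ built from permutations of the $\bV$-tensorands, the evaluation $\bV\otimes\bV\to\bC$ and coevaluation $\bC\to\bV\otimes\bV$ given by the invariant form on $\bV$, and, on the spin vertices, Clifford multiplication $\bV^{\otimes k}\otimes\Delta\to\Delta$. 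The first task is to verify $\Phi$ is multiplicative: one composes two such operators and matches the result with the combinatorial product of \S\ref{Diagram Algebra}, noting that closed $\bV$-loops contribute the powers of $\delta=N$ in the product rule and that the amalgamation of the two spin blocks is governed by the Clifford relations $v\cdot v=\langle v,v\rangle$, $v\cdot w+w\cdot v=2\langle v,w\rangle$, which is precisely where the signs and parameter bookkeeping of that rule originate.

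For surjectivity, observe that $-1\in\Pin(N)$ acts by a scalar on $\bV^{\otimes n}\otimes\Delta$, so conjugation makes $\Pin(N)$ act on $\End(\bV^{\otimes n}\otimes\Delta)$ through $\lO(N)$; using $\bV\cong\bV^*$ one gets
\[
\End_{\Pin(N)}(\bV^{\otimes n}\otimes\Delta)\;\cong\;\bigl(\bV^{\otimes 2n}\otimes\End(\Delta)\bigr)^{\lO(N)} .
\]
Now $\End(\Delta)$, with its $\lO(N)$-action, is the Clifford algebra $\mathrm{Cl}(\bV)$ (up to the usual modification when $N$ is odd), which as an $\lO(N)$-module is $\bigoplus_{k\ge 0}\Lambda^k\bV$; each summand $\bigl(\bV^{\otimes 2n}\otimes\Lambda^k\bV\bigr)^{\lO(N)}$ is a quotient of $\bigl(\bV^{\otimes(2n+k)}\bigr)^{\lO(N)}$, which by Brauer's First Fundamental Theorem is spanned by the complete-contraction tensors indexed by perfect matchings. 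Transporting a matching back through these identifications, a pair joining two of the first $2n$ slots becomes a Brauer through-strand/cup/cap on vector vertices, while a pair joining a vector slot to one of the last $k$ slots becomes a ``leg'' of the spin block acting by Clifford multiplication; these are exactly the operators $\Phi(d)$, so $\Phi$ is onto for every $N$.

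For injectivity when $N\ge 2n$, I would compare dimensions. Since finite-dimensional $\Pin(N)$-modules are semisimple, $\dim_{\bC}\End_{\Pin(N)}(\bV^{\otimes n}\otimes\Delta)=\sum_\lambda m_\lambda^2$, where $m_\lambda$ is the multiplicity of the irreducible constituent $\Delta_\lambda$. For $N\ge 2n$ one is in the stable range, where tensoring by $\bV$ obeys a box-adding/box-removing Pieri rule with no truncation or collision, so $m_\lambda$ counts length-$n$ up--down paths from $\varnothing$ to $\lambda$ in the relevant lattice; hence $\sum_\lambda m_\lambda^2$ counts pairs of such paths with a common endpoint, and a path-reversal (oscillating-tableau) bijection identifies this number with the number of diagram basis elements of $\SB_n(N)$, i.e.\ with $\dim\SB_n(N)$, which is independent of the parameter. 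A surjection between finite-dimensional spaces of equal dimension is an isomorphism. (Alternatively, one proves injectivity directly by evaluating a relation $\sum_d c_d\,\Phi(d)=0$ on tensors $e_{i_1}\otimes\cdots\otimes e_{i_n}\otimes w$ with $i_1,\dots,i_n$ together with $n$ further ``output'' indices all distinct, which is possible exactly when $N\ge 2n$, and reading off each $c_d$ from the resulting index pattern, using that Clifford monomials in distinct basis vectors are linearly independent when their length is at most $N$.)

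The delicate point is the sharp bound. Failure of injectivity for small $N$ comes entirely from the vanishing $\Lambda^k\bV=0$ for $k>N$ together with collapses of Clifford monomials, and one must show these impose no relation among the $\Phi(d)$ once $N\ge 2n$ — the number $2n$ arising because a spin block can carry up to $2n$ legs ($n$ from the top $\Delta$ and $n$ from the bottom), so that $\Lambda^{2n}\bV\ne 0$ and the corresponding Clifford monomials stay linearly independent in exactly that range; the stable Pieri rule behind the multiplicity count is valid on the same range. Making the evaluation argument simultaneously separate the ``Brauer part'' and the ``Clifford part'' of a diagram with correct signs is the piece requiring real care; by contrast the multiplicativity verification of the first step, though lengthy, is essentially mechanical.
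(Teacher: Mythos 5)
Your strategy is viable but genuinely different from the paper's on the surjectivity/injectivity side, so let me first compare. The paper's own analytic content is the homomorphism property (Theorem \ref{THMMultAgrees}); for surjectivity and the stable bijection it does \emph{not} redo invariant theory, but shows (Lemma \ref{Decompose}) that $\Delta$ and $\bV\otimes\Delta$ are multiplicity-free, so by Schur's lemma the maps $\pi$ and $\iota$ agree up to scalars with Koike's, and then imports Koike's spanning and stable-bijectivity results (Theorem \ref{Iso}). You instead re-derive surjectivity from the first fundamental theorem for $\lO(N)$ applied to $(\bV^{\otimes 2n}\otimes\End(\Delta))^{\lO(N)}$ with $\End(\Delta)$ resolved through $\bigoplus_k\bigwedge^k\bV$, and injectivity for $N\ge 2n$ by a dimension count; this would buy a self-contained argument (essentially reconstructing Koike's half) at the price of reproving what the paper cites.

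Two concrete gaps as written. First, the dimension count: with only box-adding/box-removing steps, the number of pairs of length-$n$ up--down paths with common endpoint is $(2n-1)!!$, i.e.\ the dimension of the Brauer algebra ${\bf B}_n$, not of $\SB_n$ (already $\dim\SB_1=2$ and $\dim\SB_2=10$). The correct stable branching is $\bV\otimes\Delta_\lambda\cong\Delta_\lambda\oplus\bigoplus_{\mu=\lambda\pm\square}\Delta_\mu$, the extra copy of $\Delta_\lambda$ coming from the zero weight of $\bV$ when $N$ is odd and from the swap of the two half-spin constituents inside the $\Pin(N)$-irreducible $\Delta_\lambda$ when $N$ is even; so the walks must allow a ``stay'' step, and these stay steps are exactly what the isolated vertices of a spin-Brauer diagram record. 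With that correction the count does match, but you would still have to prove the branching rule in the range $N\ge 2n$ and the path/diagram bijection, neither of which is in the paper either. Second, the multiplicativity of \eqref{mapconvert} is not ``essentially mechanical'': composing two diagram operators leaves the spin projections and injections out of order, and restoring the order requires the spin-Clifford relation $\Omega+\Omega'=2\Omega''$ of Definition \ref{CliffordDef}, whose factor of $2$ and sign bookkeeping come from \eqref{DXCommute} (Lemma \ref{CliffEquivariant}); in addition the closed circuits passing through isolated vertices (types \eqref{Circ II}--\eqref{Circ V}) and through strings terminating at isolated vertices (Lemma \ref{TSLemma}) need separate verification. This is the bulk of the paper's proof, and both your surjectivity transport and your dimension count presuppose it, so it cannot be deferred as routine.
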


%In Section \ref{ProjInjMaps}, we construct the $\Pin(N)$-equivariant projection $V \otimes \Delta \twoheadrightarrow \Delta$ (also discussed in \cite{sam2016infinite}) and injection $\Delta \hookrightarrow V \otimes \Delta$. We then prove equivariance. By defining these maps explicitly, we establish a more concrete connection between $\SB_n(\delta)$ and $\End_{\Pin(N)}(\bV^{\otimes n} \otimes \Delta)$.

% The existence of additional equivariant maps to and from $\Delta$ complicates the composition structure of $\End_{\Pin(N)}(\bV^{\otimes n} \otimes \Delta)$.  Consequently, unlike the other diagram algebras mentioned above, defining a multiplication structure on $\SB_n(\delta)$ is not a straight-forward extension of multiplication in ${\bf B}_n(\delta)$. In section \ref{Diagram Algebra}, we provide this purely combinatorial description of $\SB_n(\delta)$. In particular, we give a multiplication structure on its basis elements and prove this structure is associative. This allows us to define the spin-Brauer diagram algebra over any commutative ring, with $n \in \bZ_{\geq 0}$ and for any parameter $\delta$. 

This shows our notion of combinatorial multiplication is correct and completes this form of Schur-Weyl duality for the Pin group. In the section \ref{Cellularity}, we use the work developed in \cite{graham1996cellular} in combination with \cite{xi1999partition} to provide a basis free proof that $\SB_n(\delta)$ is cellular. We then use Graham and Lehrer's work to describe an indexing set for the irreducible representations of $\SB_n(\delta)$.

%defined the notion of a cellular algebra and proved that $\mathbf{B}_n(\delta)$ is cellular.  Using properties of cellular algebras, they described the irreducible representations of the Brauer algebra completely. Many other diagram algebras were proved to be cellular. For example, both the partition algebra \cite{xi1999partition} and walled Brauer diagram algebra \cite[Theorem 2.7]{CVDM2008} are cellular.
%
%In the Section \ref{Cellularity}, we provide a basis free proof that $\SB_n(\delta)$ is cellular using a method developed in \cite{xi1999partition}.  We then use Graham and Lehrer's work to describe an indexing set for the irreducible representations of $\SB_n(\delta)$.

\subsection{Outline of Argument}
The proof of Theorem \ref{TheoremA} breaks into the following steps:
\begin{enumerate}
\item In section \ref{Diagram Algebra}, we provide a purely combinatorial description of $\SB_n(\delta)$ and prove that with this multiplication structure, $\SB_n(\delta)$ is an associative algebra.
\item In section \ref{ProjInjMaps}, we explicitly construct the $\Pin(N)$-equivariant projection $V \otimes \Delta \twoheadrightarrow \Delta$ (also discussed in \cite{sam2016infinite}) and injection $\Delta \hookrightarrow V \otimes \Delta$. We then prove equivariance.
\item In section \ref{Multiplication Agrees}, we use this explicit construction to prove that the combinatorial multiplication structure we define in Section \ref{Diagram Algebra} agrees with the composition of maps the diagrams represent, as described in Section \ref{ProjInjMaps}.
\item We then prove the equivalence of our diagram algebra with the diagram algebra Koike mentions in his paper and use this to deduce that $\SB_n(N)$ surjects onto $\End_{\Pin(N)}(\bV^{\otimes n} \otimes \Delta)$ and is an isomorphism for $N \geq 2n$.
\end{enumerate}

\subsection{Relation to Previous Work} \label{relation}
Following Brauer's work, many variations of Schur-Weyl duality for matrix subgroups of $\GL(V)$ and their corresponding centralizer algebras were investigated.
\begin{itemize}[$\bullet$]
\item Koike \cite{KoiWB} and Turaev \cite{Tur} independently discovered the walled-Brauer diagram algebra, ${\rm Br_{r,t}(N)}$ as the centralizer of $\GL(V)$ on $V^{\otimes r} \otimes  (V^\ast)^{\otimes t}$. It has since been highly studied.  For example, in \cite{BCHLLS} the authors decompose $V^{\otimes r} \otimes  (V^\ast)^{\otimes t}$ into irreducible $\GL(V)$-modules. Then, in \cite{CVDM2008} Cox, De Visscher, Doty and Martin discuss its blocks and semi-simplicity.
\item Martin \cite{Mar1,Mar2} and Jones \cite{Jon} independently discovered the partition diagram algebra.  It arose within the context of statistical mechanics as the centralizer of the action of $\fS_n$ on $\bV^{\otimes k}$, the $k$-fold tensor product of the $n$-dimensional permutation representation representation $\bV$. In \cite{HR}, Halverson and Ram provided an explicit presentation by generators and relations and showed the existence of Murphy elements.
\item Diagram algebras often emerge as the space of morphisms in Deligne categories. For an introduction to Deligne categories, we refer the reader to \cite{Et1,Et2}. In particular, the Brauer diagram algebras describe the morphisms in the category ${\rm Rep}(\GL_t)$ and the partition diagram algebras describe the morphisms in the category ${\rm Rep}(\Sigma_t)$. We expect a corresponding theory for the spin-Brauer algebra interpolating the categories of representations of the group $\Pin_n$ as $n \in \bN$ varies.
\item In \cite{graham1996cellular}, Graham and Lehrer defined the notion of a cellular algebra and proved that $\mathbf{B}_n(\delta)$ is cellular.  Many other diagram algebras were proved to be cellular. For example, both the partition algebra \cite{xi1999partition} and walled Brauer diagram algebra \cite[Theorem 2.7]{CVDM2008} are cellular.
\end{itemize}

\subsection{Conventions and Background}
Unless otherwise stated we will be working over a field $\bk$ of characteristic $0$. We always use $\bV$ to denote the complex $N$-dimensional standard representation of the orthogonal group $\lO(N)$ or equivalently the standard representation of $\Pin(N)$. $\Delta$ will denote the spin representation of $\Pin(N)$, which we define more explicitly below. 

We assume a basic knowledge of the Clifford Algebra $\fC(Q)$ and the pin group $\Pin(N,Q)$, where $Q$ is a bilinear form. We will define a bilinear form in Section \ref{Spinor Rep} and use this same bilinear form throughout the paper.  As a result, we suppress the bilinear form in $\Pin(N,Q)$, just writing $\Pin(N)$. We will use the fact that $\Pin(N)$ is the simply-connected double cover of the orthogonal Lie group $\lO(N)$ with associated Lie algebra $\fso(N)$. Additionally, we recall $\Pin(N)$ is not connected.  Indeed, it has two connected components given by $\Pin(N) \cap \fC^{\text{even}}(Q)$ and $\Pin(N) \cap \fC^{\text{odd}}(Q)$. 

Furthermore, the subgroup $\Spin(N)  \simeq \Pin(N) \cap \fC^{\text{even}}(Q) \subset \Pin(N)$ is a connected and simply-connected Lie group with Lie algebra $\fso(N)$.   Being a connected and simply connected Lie group, studying $\Spin(N)$-equivariant maps is equivalent to studying $\fso(N)$-equivariant maps. If we prove $\fso(N)$-equivariance, we can then deduce $\Pin(N)$-equivariance by checking equivariance for one element in $\Pin(N) \cap \fC^{\text{odd}}(Q)$. Indeed, this element will generate the odd degree subspace of $\Pin(N) \cap \fC^{\text{odd}}(Q)$ as a $\Spin(N)$-algebra.

% In other words, given any $\gamma \in \Pin(N) \cap \fC^{\text{odd}}(Q)$ if we prove $\fso(N)$-equivariance and hence $\Spin(N)$-equivariance, it suffices to prove equivariance for a single $\gamma' \in \Pin(N) \cap \fC^{\text{odd}}(Q)$.  Indeed, we can always write $\gamma = - \gamma' (\gamma' \gamma)$. This is because by the definition of $\Pin(N)$, $\gamma' \gamma' = Q(\gamma',\gamma') = -1$.  Then because $\gamma' \gamma \in \Spin(N)$, if we prove equivariance for $\gamma'$ and $\Spin(N)$, this implies $\gamma$-equivariance.  
 
We take this perspective because proving $\fso(N)$-equivariance is easier and more illuminating than working with the spin or pin groups. We identify $\fso(N)$ with $\bigwedge^2 \bV$,  where $\bV$ is the $N$-dimensional standard representation. For further background the reader might consult \cite[$\S$ 20]{fulton2013representation}.

\subsection{Acknowledgements}
I thank Steven V Sam for his constant guidance and helpful conversations. I also thank two anonymous referees for many helpful comments that greatly improved the exposition of this paper. This work was supported by NSF grant DMS-1502553.

\section{The Spinor Representation} \label{Spinor Rep}

\subsection{Basic Definitions} We begin by defining $\fso(N)$ for both even and odd-dimensional standard representations $\bV$ similar to \cite{sam2016infinite}. We will make this definition and then use it to explicitly describe the action of $\fso(N)$ on the standard representation. Let $\bW = \bC^m$ and $\bW^\ast = (\bC^m)^\ast$ its dual and put
\[
\ol{\bV} = \bW \oplus \bW^\ast, \qquad \bV = \ol{\bV} \oplus \bC.
\]
Let $\be$ be a basis vector for the one dimensional space $\bC$ of $\bV$. We define an orthogonal form on $\bV$ so that $\bW$ and $\bW^\ast$ are both $m$-dimensional isotropic subspaces of $\bV$ and $\bC$ is a one-dimensional space perpendicular to both of them.  Define the orthogonal form $\omega'$ on $\ol{\bV}$ by
\[
\omega'((v,f),(v',f')) = f'(v) + f(v').
\]
Extend this to an orthogonal form $\omega$ on $\bV$ by setting $\omega(\be,\be) = 1$ and $\omega(\be,v) = 0$ for all $v \in \ol{\bV}$. For even $N$, when we discuss the standard representation we mean the vector space $\ol{\bV}$ with orthogonal form $\omega'$, for odd $N$ we will mean the vector space $\bV$ with orthogonal form $\omega$. 

Now we are ready to describe $\fso(N)$. Recall there is an isomorphism of $\fso(N)$ with the second exterior power of the standard representation, as seen for example in \cite[\S20.1]{fulton2013representation}. Using this, for odd $N= 2m+1$, we have
\[
\fso(2m+1) \cong \bigwedge^2 \bV = \bigwedge^2 \bW \oplus (\bW \otimes \be) \oplus (\bW \otimes \bW^\ast) \oplus (\bW^\ast \otimes \be) \oplus \bigwedge^2 \bW^\ast.
\]
Here we use the standard decomposition of $\bigwedge^2 (\bW \oplus \bW^\ast \oplus \be)$. For even $N = 2m$, as discussed the standard representation is $\ol{\bV} = \bW \oplus \bW^\ast$, so we have
\[
\fso(2m) \cong \bigwedge^2 \ol{\bV} = \bigwedge^2 \bW \oplus (\bW \otimes \bW^\ast) \oplus \bigwedge^2 \bW^\ast.
\]
Throughout this paper, we prove $\fso(N)$-equivariance by considering the actions of the above summands separately. We adopt the notation of \cite{sam2016infinite} and define elements of $\fso(N)$ as follows
\begin{itemize}
\item For $v,w \in \bW$ we let $x_{v,w} = v \wedge w$ and $x_v = v \otimes e$.
\item For $v \in \bW$ and $\lambda \in \bW^\ast$ we let $h_{v,\lambda} = v \otimes \lambda$.
\item For $\lambda,\mu \in \bW^\ast$ we let $y_{\lambda,\mu} = \lambda \wedge \mu$ and $y_\lambda = \lambda \otimes e$.
\end{itemize}
Now we define a map $\fso(N) \to \fgl(\bV)$.  Suppose $u \in W \subset V$, then
{\small \[
x_{v,w} u = 0, \quad x_v u = 0, \quad h_{v,\lambda} u = \lambda(u) v, \quad y_\lambda u = \lambda(u)\be, \quad y_{\lambda,\mu} u = \mu(u) \lambda - \lambda(u) \mu.
\]}
We define the action on $\eta \in \bW^\ast$ similarly:
{\small \[
x_{v,w} \eta = \eta(w) v - \eta(v) w, \quad x_v \eta = -\eta(v)\be, \quad h_{v,\lambda} \eta = -\eta(v) \lambda, \quad y_\lambda \eta = 0, \quad y_{\lambda,\mu} \eta = 0.
\]}
Finally, put
\[
x_{v,w} \be = 0, \qquad x_v \be = v, \qquad h_{v,\lambda} \be = 0, \qquad y_\lambda \be = -\lambda, \qquad y_{\lambda,\mu} \be = 0.
\]
As mentioned in \cite{sam2016infinite} this is a well defined representation of $\fso(N)$ that respects the orthogonal form on $\bV$ for odd dimension and $\ol{\bV}$ for even dimension.  With this action, if $N$ is odd, $\bV$ is the {\bf standard representation} of $\fso(N)$.  If $N$ is even, $\ol{\bV}$ is the {\bf standard representation} of $\fso(N)$.

We will adopt the following perspective on the spin representation $\Delta$ of $\fso(N)$ found in  \cite{sam2016infinite,fulton2013representation}. Given the decomposition of the standard representation as $\bV = \bW \oplus \bW^\ast \oplus \be$ or $\ol{\bV} = \bW \oplus \bW^\ast$ with $\bW$ $m$-dimensional, we put 
\[
\Delta = \bigwedge^\bullet \bW = \bigwedge^0 \bW \oplus \cdots \oplus \bigwedge^m \bW
\]
the exterior algebra on $\bW$.

As in \cite{sam2016infinite}, we define the following operators on $\Delta$.  For $v \in \bW$, let $X_v$ be the operator on $\Delta$ given by
\[
X_v(w) = v \wedge w.
\]
And for $\lambda \in \bW^\ast$ let $D_{\lambda}$ be the operator on $\Delta$ given by
\[
D_{\lambda}(v_1 \wedge \cdots \wedge v_n) = \sum_{i=1}^n (-1)^{i-1} \lambda(v_i) v_1 \wedge \cdots \wedge \widehat{v_i} \wedge \cdots \wedge v_n.
\]
Where $\widehat{v_i}$ means we omit this term from the product. With these definitions let
\[
H_{v,\lambda} = X_v D_\lambda.
\]
This is the usual action of an element $v \otimes \lambda \in \fgl(W)$ on $\Delta$. Finally, define the operator $D$ by
\[
D(v_1 \wedge \cdots \wedge v_n) = (-1)^n v_1\wedge \cdots \wedge v_n.
\]
Notice both the operators $X_v$ and $X_w$ supercommute for any $v,w \in W$ as do the operators $D_\lambda$ and $D_\mu$ for any $\lambda, \mu \in \bW^\ast$.  That is, $X_v X_w + X_w X_v = 0$; so reversing the order of composition results in a sign change.  We also have the following relationship between the operators $X_v$ and $D_\lambda$
\begin{equation} \label{DXCommute}
 X_v D_\lambda + D_\lambda X_v = \lambda(v).
\end{equation}
Finally, we note that $D$ supercommutes with both $X_v$ and $Y_\lambda$. Given these operators, we define a representation $\rho$ of $\fso(N)$ on $\Delta$ as follows
\begin{align*}
&\rho(x_{v,w}) = X_v X_w &\rho(x_v) = \tfrac{1}{\sqrt{2}} &X_v D & \rho(h_{v,\lambda}) = H_{v,\lambda} - \tfrac{1}{2} \lambda(v)\\
&\rho(y_\lambda) = \tfrac{1}{\sqrt{2}} D D_\lambda &\rho(y_{\lambda,\mu}) =  D_\lambda D_\mu.
\end{align*}
This is a well-defined representation.  In particular, the scalar $\tfrac{1}{\sqrt{2}}$ ensures that the action respects the Lie bracket. We call $\Delta$ the {\bf Spinor Representation} of $\fso(N)$.

\section{Spin-Brauer Diagram Algebra} \label{Diagram Algebra}
We follow the work of Brauer \cite{brauer1937algebras} and define the spin-Brauer diagram algebra $\SB_n(\delta,)$ as a purely combinatorial object.  In particular, we describe an associative multiplication structure. 
\begin{definition} \label{DatumDef}
For any parameter $\delta$ and positive integer $n$, a {\bf spin-Brauer diagram} consists of five parts $(U,U',\Gamma,\Gamma', f)$ where
\begin{itemize}
\item $U$ and $U'$ are subsets of $T = \{1,\dots,n\}$ and $T' = \{1',\dots,n'\}$ with a total order corresponding to the standard total order on $T$ and $T'$,
\item $\Gamma$ and $\Gamma'$ are partial matchings on $T \setminus U$ and $T' \setminus U'$ respectively so that $|T \setminus (U \cup V(\Gamma))| = |T' \setminus (U' \cup V(\Gamma'))|$, here $V(\Gamma)$ denotes the vertex set of the graph describing the partial matching $\Gamma$. Recall that a partial matching is a list of pairs of elements from a given set.
\item $f$ is a bijection $T \setminus (U \cup V(\Gamma)) \to T' \setminus (U' \cup V(\Gamma'))$.
\end{itemize}
We call $(U,U',\Gamma,\Gamma', f)$ the {\bf spin datum} for the spin-Brauer diagram.
\end{definition}

We can think of this spin datum as a diagram by creating two rows of $n$ vertices corresponding to $T$ and $T'$.  The row for $T$ will be on the top and $T'$ on the bottom.  We circle all the elements of $U$ and $U'$ and label them with their total order. We call these circled vertices {\bf isolated vertices}. We then place an edge between $x,y \in T \setminus U$ if $(x,y) \in \Gamma$ and similarly draw an edge between the pairs in $\Gamma'$. These edges are called {\bf arcs}.  Finally, connect $x \in T \setminus (U \cup V(\Gamma))$ to $f(x) \in  T' \setminus (U' \cup V(\Gamma'))$ with an edge. These edges are called {\bf through strings}. Consider the following example converting a spin datum to a diagram.

\begin{example} \label{DatumExample}
Let $n = 5$.  The spin datum $U = \{2,5\}$, $U' = \{1',4'\}$, $\Gamma = \{(1,3)\}$, $\Gamma' = \{(2',5')\}$ and $f$ defined by $f(4) = 3'$ corresponds to the following spin-Brauer diagram
\[
\xymatrix{
\bullet \ar@{-}@/^1pc/[rr] & \odot^1 & \bullet &\bullet \ar@{-}[dl] & \odot^2\\
\odot_1 &\bullet \ar@{-}@/_1pc/[rrr] & \bullet & \odot_2 &\bullet.
}
\] 
Here $\Gamma$ and $\Gamma'$ describe the vertices connected by arcs.  $U$ and $U'$ describe the isolated vertices in the first and second row respectively. Then $f$ describes how the through strings connect. It is also clear that we could reverse this process and easily read off the spin datum from the diagram.
\end{example}

Fix a parameter $\delta$ and $n \in \bZ_{\geq 0}$. Let $\Omega_1 = (U_1,U_1',\Gamma_1,\Gamma_1',f_1)$ and $\Omega_2=(U_2,U_2',\Gamma_2,\Gamma_2',f_2)$ be spin-Brauer diagrams with $n$ vertices in each row.  We define a multiplication structure on spin-Brauer diagrams and extend this multiplication structure linearly. 
 Let $\Omega = \Omega_2\Omega_1$ be the diagram constructed as follows:
%
%Explicitly for any commutative ring $R$, to calculate $(\sum_i \alpha_i \Omega_i)(\sum_j \beta_j \Omega_j)$ with $\alpha_i,\beta_j \in R$, as a linear combination of spin-Brauer diagrams write
%\[
%\left (\sum_i \alpha_i \Omega_i \right)\left(\sum_j \beta_j \Omega_j \right) = \sum_{i,j} \alpha_i \beta_j \Omega_i \Omega_j
%\]
%and apply the following process to each of the $\Omega_i \Omega_j$. 
\begin{enumerate}[1)]
\item Place $\Omega_1$ on top of $\Omega_2$.
\item \label{Mult2} Define a new total ordering on all of the isolated vertices in $\Omega_1$ and $\Omega_2$ as $U_1 < U_1' < U_2 < U_2'$ with the total orders on each of the $U_i$ preserved.
\item \label{Mult3} Any arcs and isolated vertices in the top row of $\Omega_1$ are also in the top row of $\Omega$ and similarly any arcs and isolated vertices in the bottom row of $\Omega_2$ are added to the bottom row of $\Omega$. 
\item \label{Mult4} If we can follow any through strings from the top row of $\Omega_1$ in position $i$ to the bottom row of $\Omega_2$ in position $j$ draw a through string in $\Omega$ from vertex $i$ in the top row to vertex $j$ in the bottom row. Following a through string means walking along the path created when you place $\Omega_1$ on top of $\Omega_2$ that begins with vertex $i$.
\item \label{Mult5} If you follow a through string originating at a vertex $i$ in $\Omega_1$ to an isolated vertex with label $n_i$ in the total order, then vertex $i$ in the top row of $\Omega$ is isolated with label $n_i$. Similarly, if you follow a through string originating at a vertex $j$ in the bottom row of $\Omega_2$ to an isolated vertex with label $n_j$, the vertex $j$ in the bottom row of $\Omega$ is isolated with label $n_j$. 

\begin{remark}
Notice the resulting diagram will not be a spin-Brauer diagram because the vertices will not be in the correct total order. We will discuss the combinatorial rule for placing these vertices back into their totally ordered state below, we call this rule the spin-Clifford relation.
\end{remark}

After this step, all the vertices in the top and bottom row of $\Omega$ will be fixed. We define a {\bf closed circuit} in the product $\Omega_2\Omega_1$ as a connected component in the graph created by identifying the vertices in the bottom row of $\Omega_1$ and top row of $\Omega_2$.  As with the Brauer diagrams, closed circuits will scale the diagram $\Omega$ by a factor of $\delta$. We describe by example all the types of closed circuits that can occur in $\Omega$:

{\tiny
\begin{align*}
&\xymatrix{
\bullet \ar@{-}@/^/[rr] &\bullet \ar@{-}@/_/[rr] &\bullet &\bullet\\
\bullet \ar@{-}@/_/[rrr] &\bullet \ar@{-}@/^/[r] &\bullet &\bullet}
\tag{I} \label{Circ I}\\
\ \\
\hline
\ \\
&\underbrace{\xymatrix{
\odot^1 & \bullet \ar@{-}@/^/[r] & \bullet &\cdots & \bullet \ar@{-}@/^/[r] & \bullet &\odot^2  \\
\bullet \ar@{-}@/_/[r] &\bullet &\bullet \ar@{-}@/_/[r] &\bullet &\cdots &\bullet \ar@{-}@/_/[r] &\bullet}}_{\textrm{$i$ arcs where $i \geq 1$}}
\tag{II}\label{Circ II}\\
\ \\ 
\hline
\end{align*}
\begin{align*}
&\overbrace{\xymatrix{
\bullet \ar@{-}@/^/[r] &\bullet &\bullet \ar@{-}@/^/[r] &\bullet &\cdots &\bullet \ar@{-}@/^/[r] &\bullet\\
\odot^1 & \bullet \ar@{-}@/_/[r] & \bullet &\cdots & \bullet \ar@{-}@/_/[r] & \bullet &\odot^2}}^{\textrm{$i$ arcs where $i \geq 1$}}
 \tag{III} \label{Circ III}\\
\ \\ 
\hline
\ \\
&\underbrace{\xymatrix{
\odot^1 &\bullet \ar@{-}@/^/[r] &\bullet &\cdots &\bullet \ar@{-}@/^/[r] &\bullet\\
\bullet \ar@{-}@/_/[r] & \bullet &\cdots & \bullet \ar@{-}@/_/[r] & \bullet &\odot^1}}_{\textrm{$i$ arcs where $i \geq 0$}}
 \tag{IV} \label{Circ IV}\\
\ \\ 
\hline
\ \\
&\underbrace{\xymatrix{
\bullet \ar@{-}@/^/[r] &\bullet &\bullet \ar@{-}@/^/[r] &\bullet &\cdots &\odot^1\\
\odot^1 & \bullet \ar@{-}@/_/[r] & \bullet &\cdots & \bullet \ar@{-}@/_/[r] & \bullet}}_{\textrm{$i$ arcs where $i \geq 0$}}
 \tag{V} \label{Circ V}\\
\ \\ 
\hline
\end{align*}}

\item \label{Mult6} Scale the diagram $\Omega$ by $\delta$ for each closed circuit.
\item \label{Mult7} Let $U \sqcup U'$ be the indices of the isolated vertices as they appear in $\Omega$. Reindex the remaining isolated vertices preserving the total order induced from step (2) so that if $n$ index numbers appear, they are $\{1,\dots,n\}$.
\ \\

This accounts for any isolated vertices we may have removed in closed circuits.  At this step the isolated vertices are not in the correct total order. To fix this, we must generalize the Clifford-relation discussed in \cite[\S 2.3]{sam2016infinite}.  Suppose $\Omega$ is a diagram resulting from this multiplication process with $U = \{n_1,\dots,n_k\}$ and $U' = \{n_1',\dots,n_\ell'\}$ placed in the the total order from step \eqref{Mult7}.  Furthermore, let $\Omega'$ be obtained by switching two consecutive elements $i,j \in U \sqcup U'$ and letting the new $U$ be the first $k$ elements and the new $U'$ the last $\ell$.  Let $\Omega''$ be the diagram obtained by removing $i$ and $j$ from $U \sqcup U'$ and placing an edge between them. 

\begin{definition} \label{CliffordDef}
 The {\bf spin-Clifford relation} is $\Omega + \Omega' = 2\Omega''$.
\end{definition}

\begin{remark}
Notice the spin-Clifford relation is a strict generalization of the Clifford relation \cite{sam2016infinite}.  Indeed, we can swap isolated vertex indices within rows or across rows so long as they are consecutive. Furthermore, it applies to any diagram that arises from this multiplication process, not strictly spin-Brauer diagrams. We will see in Section \ref{Multiplication Agrees} that this generalization is possible because diagrams arising from this multiplication process correspond to well defined compositions of maps in the centralizer algebra. 
\end{remark}

\item\label{Mult8} Use the spin-Clifford relation to place the isolated vertices back in increasing order, where the total order comes from step \eqref{Mult2}. Do this in the minimal number of steps. If there are multiple minimal methods, choose the one which swaps the smallest possible pairs of vertices across rows, then corrects within rows.
\item \label{Mult9} Reindex the isolated vertices in the bottom row so they begin at one in the total order.

\end{enumerate}

Consider the following example of multiplying two spin-Brauer diagrams

\begin{example}
{\tiny
\[
\xymatrix @R=.7pc{
\bullet \ar@{-}[dd] &\bullet \ar@{-}@/^/[r] &\bullet &\odot^1 &\bullet \ar@{-}[ddr] &\bullet \ar@{-}@/^/[r] & \bullet\\
\\
\bullet &\odot_1 &\bullet \ar@{-}[r] &\bullet & \odot_2 &\bullet &\odot_3\\
\bullet \ar@{-}[dd] & \bullet \ar@{-}[r] &\bullet &\bullet \ar@{-}[r] &\bullet &\odot_1 &\bullet \ar@{-}[ddlllll]\\
\\
\bullet &\bullet &\bullet \ar@{-}@/_/[r] &\bullet & \odot_1 &\bullet \ar@{-}@/_/[r] &\bullet
}
\]}
This example will illustrate every piece of the multiplication process. Following step \eqref{Mult2}, we relabel all the isolated vertices to put them in a total order, increasing left to right and top to bottom. We preserve all of the arcs and isolated vertices as described in \eqref{Mult3}.  Similarly, we draw through strings as described in \eqref{Mult4}.  In this example, we have two through strings that terminate in isolated vertices.  As described in step \eqref{Mult5}, the originating vertex of each of these through strings becomes isolated.  Furthermore, according to step \eqref{Mult6} each closed circuit scales by $\delta$. We have one closed circuit in this example of type \eqref{Circ II} with $2$ arcs.  Applying all of these steps the first simplification is,
\[
\xymatrix@R=.6pc{
&\bullet \ar@{-}[dd] &\bullet \ar@{-}@/^/[r] &\bullet &\odot^1 &\odot^5 &\bullet \ar@{-}@/^/[r] & \bullet\;\\
\delta &&&&&&&\\
&\bullet &\odot_4 &\bullet \ar@{-}@/_/[r] &\bullet & \odot_6 &\bullet \ar@{-}@/_/[r] &\bullet.
}
\]
As mentioned, this is not a spin-Brauer diagram. Following step \eqref{Mult7} we reindex the isolated vertices while maintaining the order,
\[
\xymatrix@R=.6pc{
&\bullet \ar@{-}[dd] &\bullet \ar@{-}@/^/[r] &\bullet &\odot^1 &\odot^3 &\bullet \ar@{-}@/^/[r] & \bullet\;\\
\delta &&&&&&&\\
&\bullet &\odot_2 &\bullet \ar@{-}@/_/[r] &\bullet & \odot_4 &\bullet \ar@{-}@/_/[r] &\bullet.
}
\]
Again, this is not a spin-Brauer diagram.  The isolated vertices of this diagram currently have total order $1 < 3 < 2 < 4$. To fix this, we need the isolated vertices to be strictly increasing from left to right and top to bottom. Following the instructions in step \eqref{Mult8}, we swap the vertices labeled $2$ and $3$ in the total order via the spin-Clifford relation,
{\tiny
\begin{align*}
&\xymatrix@R=.6pc{
&\bullet \ar@{-}[dd] &\bullet \ar@{-}@/^/[r] &\bullet &\odot^1 &\odot^2 &\bullet \ar@{-}@/^/[r] & \bullet\;\\
-\delta &&&&&&&\\
&\bullet &\odot_3 &\bullet \ar@{-}@/_/[r] &\bullet & \odot_4 &\bullet \ar@{-}@/_/[r] &\bullet\;\\
&\bullet \ar@{-}[dd] &\bullet \ar@{-}@/^/[r] & \bullet &\odot^1 &\bullet \ar@{-}[ddlll] &\bullet \ar@{-}@/^/[r] & \bullet\;\\
+2\delta&&&&&&&\\
&\bullet &\bullet &\bullet \ar@{-}@/_/[r] &\bullet & \odot_4 &\bullet \ar@{-}@/_/[r] &\bullet.
}
\end{align*}}
Concluding, we proceed to \eqref{Mult9}, reindexing the isolated vertices to get,
{\tiny
\begin{align*}
&\xymatrix@R=.6pc{
&\bullet \ar@{-}[dd] &\bullet \ar@{-}@/^/[r] &\bullet &\odot^1 &\odot^2 &\bullet \ar@{-}@/^/[r] & \bullet\;\\
-\delta &&&&&&&\\
&\bullet &\odot_1 &\bullet \ar@{-}@/_/[r] &\bullet & \odot_2 &\bullet \ar@{-}@/_/[r] &\bullet\;\\
&\bullet \ar@{-}[dd] &\bullet \ar@{-}@/^/[r] & \bullet &\odot^1 &\bullet \ar@{-}[ddlll] &\bullet \ar@{-}@/^/[r] & \bullet\;\\
+2\delta&&&&&&&\\
&\bullet &\bullet &\bullet \ar@{-}@/_/[r] &\bullet & \odot_1 &\bullet \ar@{-}@/_/[r] &\bullet.
}
\end{align*}
}
\end{example}

\begin{definition}
For any ring $R$, parameter $\delta$ and $n \in \bZ_{\geq 0}$, define the algebra consisting of $R$-linear combinations of spin-Brauer diagrams on $2n$-vertices with the above multiplication structure as the {\bf spin-Brauer Diagram Algebra} denoted $\SB_n(\delta)$.
\end{definition}

\begin{definition}
We define $\cB(\SB_n(\delta))$ to be the basis for $\SB_n(\delta)$ consisting of all spin-Brauer diagrams on $n$ vertices.
\end{definition}

\begin{theorem}
For any ring $R$, $n \in \bZ_{\geq 0}$ and parameter $\delta$, $\SB_n(\delta)$ is an associative algebra. Furthermore, the multiplication coefficients for the basis $\cB(\SB_n(\delta)$ are in $\bZ[\delta]$.
\end{theorem}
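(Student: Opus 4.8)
The statement packages two claims: every structure constant lies in $\bZ[\delta]$, and the product is associative. The first is immediate from the definition of the multiplication. In forming $\Omega_2\Omega_1$ the only operations that affect coefficients are scaling by $\delta$ once per closed circuit (of which there are at most $n$), applying the spin-Clifford relation of Definition~\ref{CliffordDef} (coefficients $\pm1$ and $2$), and reindexing isolated vertices (coefficient $1$); since the sort in step~\eqref{Mult8} terminates in a bounded number of moves, each structure constant is a polynomial in $\delta$ with integer coefficients. As these polynomials are the same for every base ring $R$, it suffices to establish associativity over $R=\bZ[\delta]$ with $\delta$ an indeterminate; the general case follows by base change along $\bZ[\delta]\to R$.

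For associativity, the plan is to show that $(\Omega_3\Omega_2)\Omega_1$ and $\Omega_3(\Omega_2\Omega_1)$ both equal a common normal form obtained by stacking all three diagrams at once and then simplifying. First, strip the isolated-vertex labels: a spin-Brauer diagram then becomes an ordinary partial Brauer diagram, and the parts of the multiplication in steps~\eqref{Mult3}, \eqref{Mult4}, \eqref{Mult6} — stacking, tracing through-strings, recording closed circuits as powers of $\delta$ — are governed by Brauer-algebra composition of the underlying partial diagrams, which is classically associative. So the entire content of the theorem lies in the isolated vertices: those created in step~\eqref{Mult5} when a through-string ends at an isolated vertex, those occurring inside the closed circuits of types~\eqref{Circ II}--\eqref{Circ V}, and the reordering of step~\eqref{Mult8}.

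Next I would set up the spin-Clifford relation as a confluent rewriting system. Work in the free $\bZ[\delta]$-module on ``pre-diagrams'' — diagrams as in Definition~\ref{DatumDef} but with the isolated vertices carrying an arbitrary total order — and let the rule be: whenever two consecutive isolated vertices $i,j$ are out of order, rewrite the pre-diagram as $-\Omega'+2\Omega''$ in the notation of Definition~\ref{CliffordDef}. This system terminates for the lexicographic measure (number of isolated vertices, then number of inversions of the total order), since $\Omega''$ has strictly fewer isolated vertices and $\Omega'$ has strictly fewer inversions. It is locally confluent by a finite check: two applicable moves either involve disjoint pairs and commute, or overlap in a triple of consecutive isolated vertices, in which case the two resulting $\bZ[\delta]$-combinations agree — this is the diagrammatic shadow of the fact that a product $c_ac_bc_c$ of three generators has a well-defined sorted form in a Clifford algebra, together with the observation that an arc produced by a merge is symmetric in its endpoints and that any closed circuit such an arc later completes contributes the same factor $\delta$ regardless. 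By Newman's lemma the rewriting has unique normal forms, so there is a well-defined $\bZ[\delta]$-linear retraction $\mathrm{sort}$ from pre-diagrams onto $\SB_n(\delta)$; in particular the tie-breaking prescription of step~\eqref{Mult8} is just one legitimate way to compute $\mathrm{sort}$, and any terminating sequence of moves gives the same answer. Associativity then follows by computing $\Omega_3\Omega_2\Omega_1$ as: stack all three; perform steps~\eqref{Mult3}--\eqref{Mult6} globally, using the total order $U_1<U_1'<U_2<U_2'<U_3<U_3'$ on isolated vertices; apply $\mathrm{sort}$ and reindex. Brauer-algebra associativity shows the through-string/arc/loop data of this triple stack is independent of which interface is resolved first, and the order-independence of $\mathrm{sort}$ shows the same for the isolated-vertex data, so this common value equals both $(\Omega_3\Omega_2)\Omega_1$ and $\Omega_3(\Omega_2\Omega_1)$.

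The main obstacle is the last reduction: one must show that sorting isolated vertices genuinely commutes with resolving a stacking interface. A merge in step~\eqref{Mult8} converts two isolated vertices into an arc, and that arc can subsequently fuse two through-strings or close a circuit, feeding back into the Brauer part of a later composition; checking that this feedback is insensitive to whether the merge is performed before or after the other interface is composed is where the real bookkeeping lies, and it is cleanest to prove a normal-form theorem for the triple stack directly rather than to iterate the binary product. (If one is willing to invoke Section~\ref{ProjInjMaps}, associativity can instead be deduced painlessly: for $N\gg 0$ the map sending a diagram to its composite of $\Pin(N)$-equivariant maps is injective, so associativity in $\End_{\Pin(N)}(\bV^{\otimes n}\otimes\Delta)$ forces $(ab)c=a(bc)$ among the $\bZ[\delta]$-valued structure constants at $\delta=N$, hence identically — but this inverts the logical order adopted in the paper.)
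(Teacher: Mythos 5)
The route you relegate to a parenthesis is, in fact, the paper's proof: the paper observes that integrality of the structure constants is clear from the construction, and then deduces associativity from Theorems \ref{Iso} and \ref{THMMultAgrees} --- for $\delta=N\geq 2n$ the map \eqref{mapconvert} is a bijection that intertwines diagram multiplication with composition in $\End_{\Pin(N)}(\bV^{\otimes n}\otimes\Delta)$, so the associativity identities among the $\bZ[\delta]$-valued structure constants hold at infinitely many integer values of $\delta$, hence identically, and over any $R$ by base change exactly as you say. There is no inversion of logical order here: Theorems \ref{Iso} and \ref{THMMultAgrees} are proved without ever invoking associativity of $\SB_n(\delta)$ (the latter is a direct verification that $f_{\Omega_2\Omega_1}=f_{\Omega_2}\circ f_{\Omega_1}$), so the forward reference is legitimate, and your ``painless'' fallback is essentially verbatim the paper's argument.

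Your primary route --- a terminating, locally confluent rewriting system for the spin-Clifford relation together with a normal form for the triple stack --- would be a genuinely different and more self-contained proof, but as written it contains a real gap, which you yourself flag as ``the main obstacle.'' Neither the local-confluence check (overlaps of three consecutive isolated vertices, where a merge creates an arc that may subsequently close a circuit and where the reindexings of steps \eqref{Mult7} and \eqref{Mult9} intervene, so the sign and $\delta$ bookkeeping is not a routine finite check) nor the commutation of your $\mathrm{sort}$ operation with resolving a stacking interface is actually carried out; both are asserted via an analogy with sorting products in a Clifford algebra. Note that step \eqref{Mult8} prescribes a specific minimal sorting order precisely because order-independence is not built into the definition; establishing it is exactly the uniqueness statement your plan needs, and the paper sidesteps this bookkeeping entirely by passing through the centralizer algebra. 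So your proposal stands only on the strength of the fallback; the main argument, as it stands, is a plan rather than a proof.
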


\begin{proof}
It is clear from the multiplication construction that all the multiplication coefficients will be polynomials in $\delta$ with integer coefficients.

Associativity will follow from Theorems \ref{Iso} and \ref{THMMultAgrees} because for $\delta = N \geq 2n$ we will see that $\SB_n(\delta)$ is isomorphic to the centralizer algebra $\End_{\Pin(N)}(\bV^{\otimes n} \otimes \Delta)$ where composition is associative and hence multiplication in $\SB_n(N)$ will be associative in these cases, but this implies associativity for general $\delta$ as well.
%As for associativity, this follows from not completing step \eqref{Mult8} until after combining all three diagrams.  In this case, you have the same closed circuits appear regardless of the order of composition so you will scale by the same constant. You can also follow through strings from the top row of the first diagram to the bottom row of the last diagram, i.e. following through strings is associative.  This is also a consequence of the fact that Brauer diagrams have an associative multiplication structure. 
%
%Finally, if a through string terminates in an isolated vertex with index $j$, this would occur regardless of the order of multiplication. Hence all the simplifications described in the multiplication would occur, changing the order of multiplication merely changes which simplifications occur first, but importantly does not change the simplifications themselves. See an example of this below.
\end{proof}

\section{Equivariant Projection and Injection Maps} \label{ProjInjMaps}
We now define a $\Pin(N)$-equivariant projection $\bV \otimes \Delta \twoheadrightarrow \Delta$ and injection $\Delta \injects \bV \otimes \Delta$. This explicit construction is the key to linking multiplication in $\SB_n(N)$ with the composition of the corresponding maps. In the following we will use the notation developed in \S\ref{Spinor Rep}.

\begin{definition}
Define the map $\pi\colon \bV \otimes \Delta \to \Delta$ on basis elements by
\[
\pi(v \otimes x) \coloneqq \sqrt{2} X_v(x) \qquad \pi(\lambda \otimes x) \coloneqq \sqrt{2} D_\lambda(x) \qquad \pi(\be \otimes x) \coloneqq D(x).
\]
Extend by linearity to all of $\bV \otimes \Delta$. We call $\pi$ the {\bf spin projection}. We also let $\pi_i$ for $1 \leq i \leq n$ denote the unique projection obtained by applying $\pi$ to the copy of $V \otimes \Delta$ sitting in tensor positions $i$ and $n+1$ and $\pi_i$ equal to the identity on all other tensor positions.
\end{definition}

\begin{lemma} \label{ProjEquivariance}
The spin projection, $\pi$, is a $\Pin(N)$-equivariant projection map.
\end{lemma}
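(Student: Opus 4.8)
The statement asks us to verify two things: that $\pi$ is $\Pin(N)$-equivariant, and that $\pi$ is (up to reindexing) a projection, i.e.\ surjective onto $\Delta$. The surjectivity is immediate: taking $v = 0$ and using the identity operator, or more concretely noting $\pi(\be \otimes x) = D(x) = \pm x$ is already a bijection on $\Delta$, so $\pi$ is visibly onto. The substance is equivariance, and following the conventions laid out in the ``Conventions and Background'' subsection, the plan is to reduce $\Pin(N)$-equivariance to $\fso(N)$-equivariance plus a check on a single odd element. So first I would prove $\fso(N)$-equivariance: for each of the generator types $x_{v,w}, x_v, h_{v,\lambda}, y_\lambda, y_{\lambda,\mu}$ of $\fso(N) \cong \bigwedge^2 \bV$, and for each of the three kinds of basis vector $u \in \bW$, $\eta \in \bW^\ast$, $\be$ spanning $\bV$, I would verify that
\[
\pi\bigl(g \cdot (z \otimes x)\bigr) = g \cdot \pi(z \otimes x)
\]
where $z \in \{u, \eta, \be\}$, $x \in \Delta$, and $g$ ranges over the generators; here the action on the left is the tensor product action $g \cdot (z \otimes x) = (gz) \otimes x + z \otimes \rho(g)x$ using the explicit formulas for $\fso(N) \to \fgl(\bV)$ from Section \ref{Spinor Rep} and the representation $\rho$ on $\Delta$.

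\textbf{Key steps.} The computation organizes into a finite table of cases. Concretely: (1) The ``diagonal'' generators $h_{v,\lambda}$ act by $H_{v,\lambda} - \tfrac12\lambda(v)$ on $\Delta$ and in the obvious $\fgl(\bW)$-way on the $\bW, \bW^\ast$ parts of $\bV$; here one uses $H_{v,\lambda} = X_v D_\lambda$ together with the commutation relation \eqref{DXCommute}, namely $X_v D_\lambda + D_\lambda X_v = \lambda(v)$, to push $X_v$ and $D_\lambda$ past each other and match terms. (2) The generators $x_{v,w} = v \wedge w$ act as $X_v X_w$ on $\Delta$ and raise $\bW^\ast$ into $\bW$ on the standard side; equivariance on $u \in \bW$ and $\be$ is essentially the statement that $X_v X_w$ annihilates the image of $X$ appropriately, while on $\eta \in \bW^\ast$ it uses $x_{v,w}\eta = \eta(w)v - \eta(v)w$ matched against $X_v X_w D_\eta$-type terms via \eqref{DXCommute}. (3) The ``creation/annihilation'' generators $x_v = v\otimes\be$ and $y_\lambda = \lambda\otimes\be$ carry the factor $\tfrac{1}{\sqrt2}$ in $\rho$, so the $\sqrt2$'s in the definition of $\pi$ are exactly what makes the $\be \otimes x$ term (which $\pi$ sends to $D(x)$) balance against the $v \otimes x$ or $\lambda \otimes x$ term (which $\pi$ sends to $\sqrt2 X_v(x)$ or $\sqrt2 D_\lambda(x)$); this is where one must be careful that $D$ supercommutes with $X_v$ and $D_\lambda$, producing the sign bookkeeping. (4) The remaining generators $y_{\lambda,\mu} = \lambda \wedge \mu$ act by $D_\lambda D_\mu$ and are handled symmetrically to case (2). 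After $\fso(N)$-equivariance is established, I would finish by invoking the reduction explained in the introduction: $\Spin(N)$-equivariance follows since $\Spin(N)$ is connected and simply connected with Lie algebra $\fso(N)$, and then $\Pin(N)$-equivariance follows by checking equivariance for a single element of $\Pin(N) \cap \fC^{\mathrm{odd}}(Q)$, which generates the odd component over $\Spin(N)$. A natural choice is the image of a unit vector (e.g.\ $\be$ itself, or $\tfrac{1}{\sqrt2}(v + \lambda)$ with $\lambda(v)=1$) in the Clifford algebra; on $\bV$ it acts by a reflection and on $\Delta$ by the corresponding Clifford multiplication, and one checks $\pi$ intertwines these by a direct computation using the same operator identities.

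\textbf{Main obstacle.} No single step is deep; the difficulty is entirely in the sign and scalar bookkeeping, and in getting the tensor-action Leibniz rule on $\bV \otimes \Delta$ consistent with the somewhat asymmetric formulas for $\rho$ (the $\tfrac12\lambda(v)$ correction in $\rho(h_{v,\lambda})$ and the $\tfrac{1}{\sqrt2}$ factors). The place I expect to spend the most care is case (3), the $x_v$ and $y_\lambda$ generators: there the image of $\pi$ genuinely mixes the three summands of $\bV$ (since $x_v \be = v$ moves the $\be$-slot into $\bW$, and $x_v \eta = -\eta(v)\be$ moves $\bW^\ast$ into the $\be$-slot), so equivariance becomes a genuine three-term identity relating $D$, $X_v$, and $D_\lambda$ rather than a one- or two-term cancellation, and the placement of the $\sqrt2$ in $\pi$ versus the $\tfrac{1}{\sqrt2}$ in $\rho$ has to be tracked exactly. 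The final $\Pin(N)$ versus $\Spin(N)$ step is conceptually routine given the setup in the introduction, but one still has to write down the odd Clifford element explicitly and confirm the intertwining, which is a short but not entirely automatic check.
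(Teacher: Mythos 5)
Your proposal follows essentially the same route as the paper: decompose $\fso(N)$ into the summands $\bigwedge^2\bW \oplus (\bW\otimes\be) \oplus (\bW\otimes\bW^\ast) \oplus (\bW^\ast\otimes\be) \oplus \bigwedge^2\bW^\ast$, verify equivariance on basis elements of $\bV\otimes\Delta$ using the supercommutation of $X_v$, $D_\lambda$, $D$ and the relation \eqref{DXCommute}, deduce $\Spin(N)$-equivariance from connectedness and simple connectedness, and finish by checking one odd element of $\Pin(N)\cap\fC^{\mathrm{odd}}(Q)$ (the paper uses $\tfrac{1}{\sqrt2}(w_1-w_1^\ast)$; your suggested choices differ only immaterially, though note your surjectivity remark via $\be$ only applies in the odd case, where the even case is just as easy via $X_{w_1}$ or $D_{w_1^\ast}$). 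This matches the paper's proof in both structure and the key computational inputs.
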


\begin{proof}
It is clear that this is a surjective map. By bilinearity of the tensor product, linearity of $\pi$ and linearity of the action of $\fso(N)$, it suffices to prove equivariance on basis elements of $\Delta$ under the action of our basis for $\fso(N)$. We will suppress many of the calculations because they are straightforward and not particularly illuminating.

In \S \ref{Spinor Rep} we decomposed $\fso(N)$ as
\[
\bigwedge^2 \bW \oplus (\bW \otimes \be) \oplus (\bW \otimes \bW^\ast) \oplus (\bW^\ast \otimes \be) \oplus \bigwedge^2 \bW^\ast.
\]
We prove equivariance under the action of each of the summands. First assume $\dim(\bV) = 2m+1$ is odd.  We will see the even case is naturally contained in the odd case. \\
\\
It suffices to prove equivariance with respect to $w_i \wedge w_j \in \bigwedge^2 \bW$ when we map $w_k \otimes a$, $w_\ell^\ast \otimes a$ and $\be \otimes a$ where $a \in \Delta$ is arbitrary. Consider the action on $w_k \otimes a$,
\begin{align*}
\pi((w_i \wedge w_j) \cdot w_k \otimes a) &= \pi(w_k \otimes X_{w_i}X_{w_j}(a)) \\
& = (\sqrt{2}) X_{w_k} X_{w_i} X_{w_j}(a)\\
%&= (\sqrt{2}) X_{w_i} X_{w_j} X_{w_k}(a)\\
&= (w_i \wedge w_j) \cdot \pi(w_k \otimes a).
\end{align*}
This holds because our operators super-commute.  Now, when we consider the action on $w_\ell^\ast \otimes a$ we have
\begin{align*}
\pi((w_i \wedge w_j) \cdot w_\ell^\ast \otimes a) &= \pi\left((w_\ell^\ast(w_j)w_i - w_\ell^\ast(w_i) w_j) \otimes a + w_\ell^\ast \otimes X_{w_i}X_{w_j}(a)\right)\\
%&=\sqrt{2} \cdot w_\ell^\ast(w_j) X_{w_i}(a)  \\ 
%&\qquad -\sqrt{2} \cdot w_\ell^\ast(w_i) X_{w_j}(a) + \sqrt{2} \cdot D_{w_{\ell}^\ast}X_{w_i} X_{w_j}(a)\\
%&= \sqrt{2} \cdot X_{w_i} X_{w_j} D_{w_\ell^\ast}(a)\\
&= (w_i \wedge w_j) \cdot \pi( w_\ell^\ast \otimes a).
\end{align*}
Finally, if we consider the action on $\be \otimes a$, we see
\begin{align*}
\pi((w_i \wedge w_j) \cdot \be \otimes a) &= \pi(\be \otimes X_{w_i} X_{w_j}(a))\\
%& = DX_{w_i}X_{w_j}(a)\\
&= X_{w_i}X_{w_j} D(a)\\
&= (w_i \wedge w_j) \cdot \pi(e\otimes a).
\end{align*}
Here, we recall that the linear operator $D$ super-commutes with all other linear operators. This establishes equivariance for the $\bigwedge^2 \bW$ summand.  The verification process for $\bigwedge^2 \bW^\ast$ is similar so we leave it to the reader.

Similarly, we work through the case of $\bW^\ast \otimes \be$ and leave $\bW \otimes \be$-equivariance to the reader.  Consider $w_j^\ast \otimes \be$, a basis element of $\bW^\ast \otimes \be$.  As before, we first consider the action of $w_j^\ast \otimes \be$ on $w_i \otimes a \in \bV \otimes \Delta$.  We have
\begin{align*}
\pi((w_j^\ast \otimes \be) \cdot w_i \otimes a) &= \pi(w_j^\ast(w_i) \be \otimes a + w_i \otimes \tfrac{1}{\sqrt{2}}D D_{w_j^\ast}(a)) \\
%&= w_j^\ast(w_i) D(a) + X_{w_i} DD_{w_j^\ast}(a)\\
&= DD_{w_j^\ast} X_{w_i}(a)\\
&= (w_j^\ast \otimes \be) \cdot \pi(w_i \otimes a).
\end{align*}
The main step follows from \eqref{DXCommute}. Now consider the action on the element $w_\ell^\ast \otimes a$,
\begin{align*}
\pi((w_j^\ast \otimes \be) \cdot w_\ell^\ast \otimes a) &= \pi(w_\ell^\ast \otimes \tfrac{1}{\sqrt{2}}DD_{w_j^\ast}(a))\\
%&= D_{w_\ell^\ast} DD_{w_j^\ast}(a)\\
&= DD_{w_j^\ast} D_{w_\ell^\ast}(a)\\
&= (w_j^\ast \otimes \be) \cdot \pi(w_\ell^\ast \otimes a).
\end{align*}
This follows from the skew commutativity of the operators. Finally consider the action on the element $e \otimes a$,
\begin{align*}
\pi((w_j^\ast \otimes \be) \cdot \be \otimes a) &= \pi(-w_j^\ast \otimes a + \be \otimes \tfrac{1}{\sqrt{2}}DD_{w_j^\ast}(a)) \\
&= -\sqrt{2} D_{w_j^\ast}(a) + \tfrac{1}{\sqrt{2}} D_{w_j^\ast}(a)\\
%&= \tfrac{1}{\sqrt{2}} DD_{w_j^\ast}D(a)\\
&= (w_j^\ast \otimes \be) \cdot \pi(\be \otimes a).
\end{align*}
As before, we use skew commutativity of the operators to obtain the third equality. This proves $\bW^\ast \otimes \be$-equivariance.  $\bW \otimes \be$-equivariance is similar and so we leave it to the reader. It remains to prove $\fgl(\bW)$ equivariance, but this is clear from the definition of our maps. 

If $N$ is even, we check equivariance for $\bigwedge^2 \bW$, $\fgl(\bW)$ and $\bigwedge^2 \bW^\ast$ exactly as above but now we do not have to consider the action of $\be \otimes a$.  Hence, the even case is naturally contained in the above work. As $\pi$ is equivariant with respect to each of the summands, it is $\fso(N)$-equivariant for any positive integer $N$. 

This establishes $\fso(N)$-equivariance and hence $\Spin(N)$-equivariance.  As discussed in \S \ref{introduction}, to prove $\Pin(N)$-equivariance, it suffices to prove $\pi$ is equivariant under the action of $\tfrac{1}{\sqrt{2}} (w_1 - w_1^\ast) \in \Pin(N) \cap \fC^{\text{odd}}(\omega)$.

\begin{remark}
We note that $\tfrac{1}{\sqrt{2}} (w_1 - w_1^\ast) \in \Pin(N) \cap \fC^{\text{odd}}(\omega)$.  Clearly it is in the odd part of the Clifford algebra.  To see it is in $\Pin(N)$ notice
\[
\omega\left(\tfrac{1}{\sqrt{2}} (w_1 - w_1^\ast),\tfrac{1}{\sqrt{2}} (w_1 - w_1^\ast)\right) = -1.
\]
\end{remark}

We check equivariance when acting on $w_i \otimes a$.  For ease of notation, let $\gamma = \tfrac{1}{\sqrt{2}} (w_1 - w_1^\ast)$. For explicit descriptions of the representations $\bV$ and $\Delta$ of $\Pin(N)$, we refer the reader to Fulton-Harris \cite[\S 20]{fulton2013representation}. We do note that in the definition of our bilinear form, we do not scale by $2$ like Fulton-Harris. Accordingly, the element $w_i^* \in \Pin(N)$ acts by $D_{w_i^*}$, not $2D_{w_i^*}$. When we consider the action of $\gamma$ on $w_i \otimes a$, we see
\begin{align*}
\pi(\gamma \cdot (w_i \otimes a)) &= \pi(\gamma \cdot w_i \otimes \gamma \cdot a)\\
%&= \pi((-w_i + w_1^*(w_i) w_1\\
%&\qquad - w_1^*(w_i)w_1^*)) \otimes (\tfrac{1}{\sqrt{2}} X_{w_1}(a) - \tfrac{1}{\sqrt{2}} D_{w_1^*}(a))\\
%&= -X_{w_i}X_{w_1}(a) - w_1^*(w_i) D_{w_1^*}X_{w_1}(a)\\
%&\qquad + X_{w_i}D_{w_1^*}(a) -  w_1^*(w_i) X_{w_1} D_{w_1^*}(a)\\
%&= -X_{w_i}X_{w_1}(a) + X_{w_i}D_{w_1^*}(a) -  w_1^*(w_i) a\\
%&= X_{w_1} X_{w_i}(a) - D_{w_1^*} X_{w_i}(a)\\
&= \gamma \cdot (\tfrac{1}{\sqrt{2}} X_{w_i}(a))\\
&= \gamma \cdot \pi(w_i \otimes a).
\end{align*}
Checking equivariance for $w_i^* \otimes a$ and $\be \otimes a$ are similar and so are left to the reader.  This proves $\Pin(N)$-equivariance. \qed
\end{proof}

\begin{definition} When $\bV$ is even dimensional, with $\dim(\bV) = 2m$ define the map $\iota\colon \Delta \to \bV \otimes \Delta$ by
\[
\iota(a) \coloneqq \sqrt{2} \left( \sum_{i=1}^m w_i \otimes D_{w_i^\ast}(a) + w_i^\ast \otimes X_{w_i}(a) \right).
\]
When $\bV$ is odd dimensional, with $\dim(\bV) = 2m+1$ define
\[
\iota(a) \coloneqq \sqrt{2} \left( \sum_{i=1}^m w_i \otimes D_{w_i^\ast}(a) + w_i^\ast \otimes X_{w_i}(a)\right) + e \otimes D(a).
\]
We call $\iota$ the {\bf spin injection}. We also let $\iota_j$ for $1 \leq j \leq n+1$ denote the unique injection obtained by applying $\iota$ to the copy of $\Delta$ in $\bV^{\otimes n} \otimes \Delta$ and placing the resulting copy of $\bV$ into the $j$th tensor position of $\bV^{\otimes n+1}$ with the other tensor positions shifted accordingly. On all other tensor positions $\iota_j$ is the identity.
\end{definition}

\begin{lemma} \label{InjEquivariance}
The spin injection, $\iota$, is a $\Pin(N)$-equivariant injection.
\end{lemma}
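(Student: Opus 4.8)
The plan is to deduce the lemma from Lemma~\ref{ProjEquivariance} rather than redo a summand‑by‑summand verification, by exhibiting $\iota$ as a composite of maps already known to be equivariant. Let $c\colon \bk \to \bV \otimes \bV$ send $1$ to the $\lO(N)$‑invariant bivector dual to the orthogonal form $\omega$, namely
\[
c(1) \;=\; \sum_{i=1}^m \bigl( w_i \otimes w_i^\ast + w_i^\ast \otimes w_i \bigr)\qquad(\text{add } \be \otimes \be \text{ when } N = 2m+1).
\]
The first step is to check the identity
\[
\iota \;=\; (\mathrm{id}_{\bV} \otimes \pi) \circ (c \otimes \mathrm{id}_{\Delta}).
\]
This is essentially a one‑line computation: for $a \in \Delta$ one has $(c \otimes \mathrm{id}_\Delta)(a) = \sum_i (w_i \otimes w_i^\ast \otimes a + w_i^\ast \otimes w_i \otimes a) + \be \otimes \be \otimes a$, and applying $\mathrm{id}_\bV \otimes \pi$ to it, using $\pi(w_i \otimes x) = \sqrt{2}\,X_{w_i}(x)$, $\pi(w_i^\ast \otimes x) = \sqrt{2}\,D_{w_i^\ast}(x)$ and $\pi(\be \otimes x) = D(x)$, reproduces the defining formula $\iota(a) = \sqrt{2}\bigl(\sum_i w_i \otimes D_{w_i^\ast}(a) + w_i^\ast \otimes X_{w_i}(a)\bigr) + \be \otimes D(a)$ verbatim (and its even‑dimensional truncation when $N = 2m$).

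Granting this identity, equivariance is formal. The tensor $c(1)$ is $\lO(N)$‑invariant since it is dual to the $\lO(N)$‑invariant form $\omega$; because the $\Pin(N)$‑action on $\bV$ factors through $\lO(N)$, the map $c$ is $\Pin(N)$‑equivariant (with $\bk$ carrying the trivial action). The map $\mathrm{id}_\bV \otimes \pi$ is $\Pin(N)$‑equivariant because $\pi$ is (Lemma~\ref{ProjEquivariance}) and tensoring an equivariant map with $\mathrm{id}_\bV$ preserves equivariance. A composite of $\Pin(N)$‑equivariant maps is $\Pin(N)$‑equivariant, so $\iota$ is.

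For injectivity I would exhibit a left inverse by computing $\pi \circ \iota$. Expanding as above and using relation~\eqref{DXCommute} in the form $X_{w_i} D_{w_i^\ast} + D_{w_i^\ast} X_{w_i} = w_i^\ast(w_i) = 1$ together with $D^2 = \mathrm{id}_\Delta$ (since $D$ acts by $(-1)^k$ on $\bigwedge^k \bW$), one finds
\[
\pi(\iota(a)) = 2\sum_{i=1}^m \bigl(X_{w_i}D_{w_i^\ast} + D_{w_i^\ast}X_{w_i}\bigr)(a) + D^2(a) = 2m\,a + a = N a
\]
when $N = 2m+1$, and likewise $\pi(\iota(a)) = 2m\,a = N a$ when $N = 2m$ (the $D^2$ term being absent). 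Since $\bk$ has characteristic $0$, $N$ is invertible, so $\tfrac1N \pi$ is a left inverse of $\iota$ and $\iota$ is a split injection.

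As a fully self‑contained alternative, one could mimic the proof of Lemma~\ref{ProjEquivariance} line for line: verify $\iota(g \cdot a) = g \cdot \iota(a)$ for $g$ ranging over bases of the summands $\bigwedge^2 \bW$, $\bW \otimes \be$, $\bW \otimes \bW^\ast$, $\bW^\ast \otimes \be$, $\bigwedge^2 \bW^\ast$ of $\fso(N)$ — remembering that on the right‑hand side $g$ acts on the $\bV$‑factor through the standard representation and on $\Delta$ through $\rho$ — and then check equivariance under the single odd element $\tfrac1{\sqrt 2}(w_1 - w_1^\ast) \in \Pin(N) \cap \fC^{\text{odd}}(\omega)$ to pass from $\Spin(N)$ to $\Pin(N)$. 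I expect the one genuinely delicate point, in either approach, to be keeping the standard‑representation action on the $\bV$‑tensor factor straight (the terms involving $\be$ in odd dimension) alongside the supercommutation signs among $X_v$, $D_\lambda$ and $D$; the factorization route is attractive precisely because it confines all of that to the single identity $\iota = (\mathrm{id}_\bV \otimes \pi)\circ(c\otimes \mathrm{id}_\Delta)$ and otherwise reduces everything to Lemma~\ref{ProjEquivariance}.
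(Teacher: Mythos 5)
Your proof is correct, but it takes a genuinely different route from the paper. The paper proves the lemma the same way it proves Lemma~\ref{ProjEquivariance}: a direct summand-by-summand verification of $\fso(N)$-equivariance (for $\bigwedge^2\bW$, $\bW^\ast\otimes\be$, etc., using the supercommutation rules and \eqref{DXCommute}), followed by a check against the single odd element $\gamma=\tfrac{1}{\sqrt2}(w_1-w_1^\ast)$ to upgrade to $\Pin(N)$, and it simply declares injectivity to be clear. You instead observe the factorization $\iota=(\mathrm{id}_\bV\otimes\pi)\circ(c\otimes\mathrm{id}_\Delta)$, where $c(1)$ is exactly the invariant element the paper uses to define the immersions $\psi_{i,j}$; once that identity is checked (your expansion is right, including the $\be\otimes D(a)$ term in odd dimension and its absence in even dimension), equivariance is formal from Lemma~\ref{ProjEquivariance} and the $\Pin(N)$-invariance of $c(1)$, which does hold for the full group since the $\Pin(N)$-action on $\bV$ factors through $\lO(N)$ and preserves $\omega$ (the paper only labels the element $\fso(N)$-invariant, so it is worth stating this point as you did). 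Your injectivity argument via $\pi\circ\iota=N\cdot\mathrm{id}$ (from \eqref{DXCommute} and $D^2=\mathrm{id}$) is also sound in characteristic $0$ and is in fact more informative than the paper's one-line dismissal; it is the map-level counterpart of the closed circuits \eqref{Circ IV}--\eqref{Circ V} scaling by $\dim\bV$ in Lemmas~\ref{CS4} and \ref{CS5}. What each approach buys: yours confines all sign-sensitive computation to one identity and reuses Lemma~\ref{ProjEquivariance}, while the paper's direct verification is self-contained and does not presuppose invariance of $c(1)$ under the disconnected group; either is acceptable.
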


\begin{proof}
It is clear that this is an injection. We will proceed as in the proof of Lemma \ref{ProjEquivariance}. First assume that $\dim(\bV) = 2m+1$ is odd.  We will prove equivariance in this case and deduce equivariance in the even dimensional case. We will suppress many of the calculations because they are straightforward and not particularly illuminating. Let $w_j \wedge w_k \in \bigwedge^2 \bW$ and $a \in \Delta$, we have
\begin{align*}
\iota((w_j \wedge w_k) \cdot a) &= \iota(X_{w_j}X_{w_k}(a))\\
%& = \sqrt{2} \left[ \sum_{i=1}^m w_i \otimes D_{w_i^\ast}X_{w_j}X_{w_k}(a) + w_i^\ast \otimes X_{w_i}X_{w_j}X_{w_k}(a)\right]\\
%&\qquad + \be \otimes DX_{w_j}X_{w_k}(a)\\
%&=\sqrt{2} \left[ \sum_{i=1}^m w_i \otimes X_{w_j}X_{w_k}D_{w_i^\ast}(a) + w_i^\ast \otimes X_{w_j}X_{w_k}X_{w_i}(a) \right] \\
%&\qquad+ \sqrt{2} \cdot w_j \otimes X_{w_k}(a) - \sqrt{2} \cdot w_k \otimes X_{w_j}(a)+ \be \otimes X_{w_j}X_{w_k}D(a)\\
&= (w_j \wedge w_k) \cdot \left[\sqrt{2} \cdot \sum_{i=1}^m w_i \otimes D_{w_i^\ast}(a) + w_i^{\ast} \otimes X_{w_i}(a) + \be \otimes D(a) \right]\\
&= (w_j \wedge w_k) \cdot \iota(a).
\end{align*}
The case for $w_j^\ast \wedge w_k^\ast$ is similar and left to the reader.  Now consider the action of the element $w_j^\ast \otimes \be$,
\begin{align*}
\iota((w_j^\ast \otimes \be) \cdot a) &= \iota(\tfrac{1}{\sqrt{2}}DD_{w_j^\ast}(a))\\
&= \sum_{i=1}^m w_i \otimes D_{w_i^\ast}DD_{w_j^\ast}(a) + w_i^\ast \otimes X_{w_i} DD_{w_j^\ast}(a) + \tfrac{1}{\sqrt{2}} \be \otimes D_{w_j^\ast}(a)\\
%&= \sum_{i=1}^m w_i \otimes DD_{w_j^\ast}D_{w_i^\ast}(a) + w_i^\ast \otimes  DD_{w_j^\ast}X_{w_i}(a) - w_j^\ast \otimes D(a)\\
%&\qquad +  \tfrac{1}{\sqrt{2}} \be \otimes D_{w_j^\ast}(a)\\
%&= \sum_{i=1}^m w_i \otimes DD_{w_j^\ast}D_{w_i^\ast}(a) + w_i^\ast \otimes  DD_{w_j^\ast}X_{w_i}(a) - w_j^\ast \otimes D(a)\\
%&\qquad  + \tfrac{2}{\sqrt{2}} \be \otimes D_{w_j^\ast}(a) - \tfrac{1}{\sqrt{2}} \be \otimes DD_{w_j^\ast} D(a)\\
%&= (w_j^\ast \otimes \be) \cdot \left( \sqrt{2} \sum_{i=1}^m w_i \otimes D_{w_i^\ast}(a) + w_i^\ast \otimes X_{w_i}(a) + e \otimes D(a) \right)\\
&= (w_j^\ast \otimes \be) \cdot \iota(a).
\end{align*}
The case for $w_j \otimes \be$ is similar and left to the reader.  It remains to verify $\fgl(W)$-equivariance, which is straightforward from the construction of our maps. 

As in the proof of Lemma \ref{ProjEquivariance}, the even dimensional case is contained in the above.  Thus $\iota$ is $\fso(N)$-equivariant and so $\Spin(N)$-equivariant.  We now check that $\iota$ commutes with the action of $\gamma = \tfrac{1}{\sqrt{2}} (w_1 - w_1^*)$,
\begin{align*}
\iota(\gamma \cdot a) &= \iota( \tfrac{1}{\sqrt{2}} X_{w_1}(a) - \tfrac{1}{\sqrt{2}} D_{w_1^*}(a))\\
&= \sum_{i=1}^m w_i \otimes D_{w_i^*} (X_{w_1} - D_{w_1^*})(a) + w_i^* \otimes X_{w_i}(X_{w_1} - D_{w_1^*})(a)\\
& \qquad \qquad + \tfrac{1}{\sqrt{2}} \be \otimes D(X_{w_1} - D_{w_1^*})(a)\\
%&=\sum_{i=1}^m - w_i \otimes (X_{w_1} - D_{w_1^*}) D_{w_i^*}(a) - w_i^* \otimes (X_{w_1} - D_{w_1^*}) X_{w_i}(a)  \\
%&\qquad\qquad+w_1 \otimes a - w_1^* \otimes a - \tfrac{1}{\sqrt{2}} \be \otimes (X_{w_1} - D_{w_1^*})D(a)\\
%&= \sum_{i=1}^m - w_i \otimes (X_{w_1} - D_{w_1^*}) D_{w_i^*}(a) - w_i^* \otimes (X_{w_1} - D_{w_1^*}) X_{w_i}(a)\\
%& \qquad\qquad+ w_1 \otimes (X_{w_1} - D_{w_1^*})D_{w_1^*}(a) - w_1 \otimes (X_{w_1} - D_{w_1^*})X_{w_1}(a)\\ 
%&\qquad\qquad- w_1^* \otimes (X_{w_1} - D_{w_1^*})D_{w_1^*}(a) + w_1^* \otimes (X_{w_1} - D_{w_1^*})X_{w_1}(a)\\
%& \qquad\qquad - \tfrac{1}{\sqrt{2}} \be \otimes (X_{w_1} - D_{w_1^*})D(a)\\
&= \gamma \cdot \iota(a).
\end{align*}
This proves that $\iota$ commutes with the action of $\gamma$ which by the discussion in \S\ref{introduction} implies $\Pin(N)$-equivariance. \qed
\end{proof} 

For $\sigma \in \fS_n$, let $\tau_{\sigma}$ be the equivariant map $\bV^{\otimes n} \otimes \Delta \to \bV^{\otimes n} \otimes \Delta$ sending the vector in tensor position $i$ to tensor position $\sigma(i)$.  We call $\tau_{\sigma}$ the {\bf swap operator}.  

We also have an equivariant map $\psi_{i,j}$ from \cite[\S 8]{koike2005spin}. Let $\psi_{i,j}$ be the linear immersion of the invariant element
\[
\sum_{i=1}^m w_i \otimes w_i^\ast + w_i^\ast \otimes w_i + (\be \otimes \be)
\]
into the $i,j$ tensor positions of $\bV^{\otimes n} \otimes \Delta$ where $\dim(\bV) = 2m+1$.  If $\dim(\bV) = 2m$, the $\fso(N)$-invariant element is
\[
\sum_{i=1}^m w_i \otimes w_i^\ast + w_i^\ast \otimes w_i.
\]
To realize the spin-Brauer diagrams as elements of the centralizer algebra, we need an additional equivariant map $\kappa_{i,j}$ for $1 \leq i < j \leq n$ called the {\bf contraction}.  This map is given by contracting the elements in the $i^{th}$ and $j^{th}$ tensor positions of $\bV^{\otimes n} \otimes \Delta$ using the bilinear form on $\bV$. By construction, $\fso(N)$ respects this bilinear form so this is an equivariant map.

\begin{remark}
Any operators that act on different tensor positions commute. This is clear, but we point it out because it will be important.
\end{remark}

\section{Spin-Brauer Multiplication agrees with Composition} \label{Multiplication Agrees}
In this section we discuss the correspondence between $\SB_n(N)$ and maps in $\End_{\Pin(N)}(\bV^{\otimes n} \otimes \Delta)$.  We conclude this section by proving that our combinatorial description of multiplication agrees with the corresponding composition of maps.

Let $\Omega \in \cB(\SB_n(N))$ be a spin-Brauer diagram with spin datum $\Omega= (U,U',\Gamma,\Gamma',f)$ and let $T = \{1,\dots,n\}$.  Furthermore, let $\sigma \in \fS_n$ be the permutation induced by $f$.  That is, $\sigma(i) = f(i)$ if $i \in T \setminus (U \cup V(\Gamma))$ and $\sigma(i) = i$ otherwise. Then $\Omega$ corresponds to the following equivariant map
\begin{equation} \label{mapconvert}
\Omega \mapsto \prod_{(i,j) \in \Gamma'} \psi_{i,j} \circ \prod_{j \in U'} \iota_j \circ \tau_\sigma \circ \prod_{i \in U} \pi_i \circ \prod_{(i,j) \in \Gamma} \kappa_{i,j} =: f_\Omega.
\end{equation}
%Arcs in the top row correspond to spin contractions. Arcs in the bottom row correspond to spin immersions.  Isolated vertices in the top row correspond to spin projections of the corresponding tensor positions in the order given by the total order on $U$.  Similarly, isolated vertices in the bottom row correspond to spin injections into the corresponding tensor positions in the order given by the total order on $U'$. Through strings correspond to swapping the corresponding tensor positions. 
Extend the correspondence in (\ref{mapconvert}) by linearity to all of $\SB_n(\delta)$.

\begin{theorem} \label{Iso}
Under this correspondence, for $n,N \in \bZ^+$, $\SB_n(N)$ surjects onto the centralizer algebra $\End_{\Pin(N)}(\bV^{\otimes n} \otimes \Delta)$ and for $N \geq 2n$ the map in (\ref{mapconvert}) is a bijection.
\end{theorem}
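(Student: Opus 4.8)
The plan is to follow Koike's strategy from \cite{koike2005spin} but now leverage the explicit equivariant maps $\pi$, $\iota$, $\psi_{i,j}$, $\kappa_{i,j}$, and $\tau_\sigma$ constructed in \S\ref{ProjInjMaps}. The surjectivity claim is essentially a restatement of Koike's result: the centralizer $\End_{\Pin(N)}(\bV^{\otimes n} \otimes \Delta)$ is spanned by compositions of the elementary equivariant maps $V \otimes \Delta \to \Delta$, $\Delta \to V \otimes \Delta$, permutations, contractions, and the $\psi$-maps, because by classical invariant theory (first fundamental theorem for $\lO(N)$ together with the single extra spinor slot) every $\Pin(N)$-invariant in $(\bV^{\otimes n} \otimes \Delta)^* \otimes (\bV^{\otimes n} \otimes \Delta)$ is built from the bilinear form on $\bV$, the equivariant maps to and from $\Delta$, and permutations. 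Thus the first task is to make precise that the image of $\SB_n(N)$ under (\ref{mapconvert}) contains a spanning set of the centralizer; I would argue that an arbitrary composition of elementary maps can be rewritten, using the relations among them (e.g. $\pi \circ \iota$ is a scalar, adjacent $\psi$'s and $\kappa$'s rearrange), into the normal form $\prod \psi_{i,j} \circ \prod \iota_j \circ \tau_\sigma \circ \prod \pi_i \circ \prod \kappa_{i,j}$ appearing on the right of (\ref{mapconvert}), which is exactly $f_\Omega$ for some diagram $\Omega$. This gives surjectivity for all $N$.

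For the injectivity (hence bijectivity) claim when $N \geq 2n$, the standard approach is a dimension count: since the map is always surjective, it suffices to show $\dim_\bk \SB_n(N) \leq \dim_\bk \End_{\Pin(N)}(\bV^{\otimes n} \otimes \Delta)$ for $N \geq 2n$. The left side is $|\cB(\SB_n(\delta))|$, the number of spin-Brauer diagrams on $n$ vertices, which is a fixed combinatorial quantity independent of $N$. For the right side, I would decompose $\bV^{\otimes n} \otimes \Delta$ into irreducible $\Pin(N)$-representations; by semisimplicity in characteristic $0$, $\dim \End_{\Pin(N)}(\bV^{\otimes n} \otimes \Delta) = \sum_\lambda m_\lambda^2$ where $m_\lambda$ is the multiplicity of the irreducible $\lambda$. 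The key point is that for $N$ large relative to $n$ (the condition $N \geq 2n$ should suffice, possibly after comparing with Koike's bound) these multiplicities stabilize, and the stable value of $\sum_\lambda m_\lambda^2$ equals exactly the number of spin-Brauer diagrams. This last combinatorial identity is the heart of the matter: one must match, bijectively, the diagram basis with a basis of the stable centralizer, e.g. by decomposing $\bV^{\otimes n} \otimes \Delta$ via iterated branching and tracking spin data.

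The main obstacle I anticipate is precisely establishing this dimension equality in the stable range, i.e.\ showing $|\cB(\SB_n(\delta))| = \dim \End_{\Pin(N)}(\bV^{\otimes n} \otimes \Delta)$ for $N \geq 2n$, and verifying that $2n$ (rather than some larger bound coming from Koike's original argument) is the correct threshold. One clean route: Koike already proves his diagram algebra is isomorphic to the centralizer for $N$ sufficiently large; the remaining work is (a) to prove that our combinatorially-defined $\SB_n(\delta)$ is isomorphic as an algebra to Koike's diagram algebra — which follows from Theorem \ref{THMMultAgrees} (multiplication agrees with composition) together with a check that the two diagram bases coincide — and (b) to sharpen "sufficiently large" to $N \geq 2n$, for which I would use the explicit injectivity of $\iota$ and the fact that when $N = 2m+1 \geq 2n$ (so $m \geq n$) the relevant spinor-weight arguments show no further relations appear among the $f_\Omega$. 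So the proof reduces to: surjectivity via normal form (all $N$), algebra isomorphism with Koike's construction via Theorem \ref{THMMultAgrees}, and the sharp stability bound $N \geq 2n$ via a weight-space or branching-rule argument showing the $f_\Omega$ remain linearly independent.
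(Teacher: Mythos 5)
Your fallback route (defer to Koike's spanning and stability theorems, then identify your algebra with his) is in fact the paper's strategy, but the step you dismiss as ``a check that the two diagram bases coincide'' is exactly where the real work lies, and your proposal gives no argument for it. Koike's diagrams denote compositions of \emph{his} equivariant maps $\bV\otimes\Delta\to\Delta$ and $\Delta\to\bV\otimes\Delta$, while $f_\Omega$ in (\ref{mapconvert}) is built from the explicit $\pi$ and $\iota$ of \S\ref{ProjInjMaps}; to transfer Koike's spanning result and his $N\geq 2n$ bijection to the maps $f_\Omega$ you must show the two families of elementary maps agree up to nonzero scalars. The paper does this by proving that $\Delta$ and $\bV\otimes\Delta$ decompose into irreducibles with no multiplicities (Lemma \ref{Decompose}, a short highest-weight computation), so that Schur's lemma forces uniqueness of the equivariant projection and injection up to scalar. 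Nothing in your sketch supplies this uniqueness, and without it neither the surjectivity nor the $N\geq 2n$ bijectivity of (\ref{mapconvert}) can be imported from Koike.

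Your primary route for injectivity --- a dimension count showing $|\cB(\SB_n(\delta))|=\dim\End_{\Pin(N)}(\bV^{\otimes n}\otimes\Delta)$ in the stable range --- is not carried out; you yourself flag the combinatorial identity as the ``heart of the matter'' and leave it open, so as written it is a plan rather than a proof (the paper avoids it entirely, inheriting the bijection and its bound from Koike). Two smaller points: invoking Theorem \ref{THMMultAgrees} here is unnecessary, since Theorem \ref{Iso} is a purely linear statement about the map (\ref{mapconvert}) and the paper keeps it independent of the multiplicative structure (the two theorems are then combined to get Theorem \ref{TheoremA} and associativity); and your surjectivity step quietly substitutes ``the first fundamental theorem for $\lO(N)$ plus the spinor slot'' for what is really Koike's invariant-theoretic spanning result for the spinor-augmented setting, so it too ultimately rests on the identification of maps described above.
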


\begin{proof}
This follows from \cite[\S 5, \S7]{koike2005spin}.  Indeed, Koike gives formulas for decomposing his maps into a composition of projections, injections, immersions and contractions as in \eqref{mapconvert} \cite[\S6, Theorem 8.1]{koike2005spin}. Accordingly, it suffices to prove our maps on one tensor component agree up to a scalar as each map in $\End_{\Pin(N)}(\bV^{\otimes n} \otimes \Delta)$ is defined as a composition of these maps. Hence if each of the maps on one tensor component agree, then the composition will also agree.

From Lemmas \ref{ProjEquivariance} and \ref{InjEquivariance} the spin projection and injection are $\Pin(N)$-equivariant and thus $\fso(N)$-equivariant. Due to the semi-simplicity of $\fso(N)$, every finite dimensional representation can be decomposed as a direct sum of irreducible representations. In particular, we can view $\iota$ and $\pi$ as equivariant maps between the irreducible components of $\Delta$ and $\bV \otimes \Delta$. 

These maps are unique on each of the components up to a scalar by Schur's lemma for semi-simple Lie algebras. Accordingly, to show uniqueness of $\iota$ and $\pi$ it suffices to show $\Delta$ and $\bV \otimes \Delta$ decompose into a direct sum of irreducible representations with no multiplicities.

\begin{lemma} \label{Decompose}
$\Delta$ and $\bV \otimes \Delta$ decompose into a direct sum of irreducible representations with no multiplicities.
\end{lemma}

\begin{proof}
The spin representation $\Delta$ is irreducible when $\dim(\bV)$ is odd and is the direct sum of the two distinct irreducible half spin representations when $\dim(\bV)$ is even \cite[\S 20]{fulton2013representation}. So it clearly decomposes as a direct sum of irreducible representations with no multiplicities.

Let $L_i$ be the linear function on diagonal matrices, the Cartan subalgebra, whose output is the $i^{th}$ diagonal entry. This is defined completely in \cite[\S 12]{fulton2013representation}.

When $\dim(\bV) = 2n+1$ is odd, $\Delta$ is irreducible with highest weight $\tfrac{1}{2} (L_1 + \cdots + L_n)$. If $\dim(\bV) = 2n$ is even, $\Delta$ decomposes as a direct sum of two representations with highest weights $\tfrac{1}{2}(L_1+\cdots+L_n)$ and $\tfrac{1}{2}(L_1 + \cdots + L_{n-1} - L_n)$. For proof of these facts we refer the reader to \cite[\S 20.1]{fulton2013representation}. While $\bV$ has weights $\{\pm L_i\} \cup \{0\}$ defined in \cite[\S 18.1]{fulton2013representation}.

The weight diagram of $\bV \otimes \Delta$ is generated by $\alpha + \beta$ where $\alpha$ is a weight of $\bV$ and $\beta$ is a weight of $\Delta$. It is easy to check that the resulting weight diagram has no multiplicities. When $\dim(\bV)$ is odd it is trivial. When $\dim(\bV)$ is even suppose we had
\[
L_i + \tfrac{1}{2}(L_1+\cdots+L_n) = L_j + \tfrac{1}{2}(L_1 + \cdots + L_{n-1} - L_n),
\]
this is the only interesting case. Simplifying we see
\[
L_i = L_j - \tfrac{1}{2} L_n.
\]
This is impossible. This implies every irreducible representation in $\bV \otimes \Delta$ will occur without multiplicity. \qed
\end{proof}

We conclude from Lemma \ref{Decompose} that $\pi$ and $\iota$ are uniquely determined up to a scalar. This implies Koike's equivariant maps $\Delta \injects \bV \otimes \Delta$ and $\bV \otimes \Delta \surjects \Delta$ must agree with $\iota$ and $\pi$ up to a scalar.

Koike used invariant theory to prove that the image of his generalized Brauer diagrams span the centralizer algebra \cite[Lemma 5.6]{koike2005spin}. By the above, the images of our spin-Brauer diagrams must span the centralizer algebra as well.  \qed
\end{proof}

\begin{theorem} \label{THMMultAgrees}
If $n,N \in \bZ_{\geq 0}$, for any $\Omega_1,\Omega_2 \in \SB_n(N)$ we have $f_{\Omega_2\Omega_1} = f_{\Omega_2} \circ f_{\Omega_1}$.
\end{theorem}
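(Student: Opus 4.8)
The plan is to verify the identity $f_{\Omega_2\Omega_1} = f_{\Omega_2}\circ f_{\Omega_1}$ by decomposing each side into the elementary equivariant maps $\pi_i, \iota_j, \tau_\sigma, \psi_{i,j}, \kappa_{i,j}$ and checking that the combinatorial multiplication steps (1)--(9) precisely record the algebraic simplifications that occur when composing. First I would reduce to the case where $\Omega_1$ and $\Omega_2$ are single diagrams (both sides are bilinear), and observe that by \eqref{mapconvert} the composite $f_{\Omega_2}\circ f_{\Omega_1}$ is an explicit word in these generators. The strategy is to push this word into the normal form prescribed by \eqref{mapconvert} using a short list of commutation and contraction relations among the generators, and to match each relation with a step of the diagram multiplication. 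The key relations to establish are: (i) operators on disjoint tensor positions commute (already noted in the remark preceding \S\ref{Multiplication Agrees}); (ii) $\pi_i\circ\iota_i = \operatorname{id}$ on the relevant factor, or more precisely $\pi\circ\iota = \delta\cdot\operatorname{id}$ after one checks the scalar $\sqrt{2}$'s against $\dim\bV = N = \delta$ — this is what produces the factor of $\delta$ for a closed circuit and matches step \eqref{Mult6}; (iii) the interaction of a contraction $\kappa_{i,j}$ or immersion $\psi_{i,j}$ with a projection/injection, which is where an arc in $\Omega_1$ meeting a through-string of $\Omega_2$ gets rerouted (steps \eqref{Mult3}--\eqref{Mult4}); and (iv) the behavior of $\pi_i\circ\iota_j$ for $i\neq j$, which is what the spin-Clifford relation of Definition \ref{CliffordDef} encodes — I expect this to unwind via \eqref{DXCommute} and the supercommutation of the $X_v$'s and $D_\lambda$'s, giving exactly the three-term identity $\Omega+\Omega' = 2\Omega''$.

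Concretely I would organize the argument as follows. Step one: record the generator relations just listed as a sequence of small lemmas, each proved by a direct computation on $\bV\otimes\Delta$ using the formulas for $\pi,\iota$ and the operator identities from \S\ref{Spinor Rep}; the scalars $\sqrt 2$ and the value of $D^2 = \operatorname{id}$ must be tracked carefully so that closed circuits of types \eqref{Circ I}--\eqref{Circ V} each contribute precisely one factor of $\delta$ and no spurious signs. Step two: take the word $f_{\Omega_2}\circ f_{\Omega_1}$ and move all contractions $\kappa$ to the far right and all immersions $\psi$ to the far left; in between one is left with a composite of $\pi$'s, $\iota$'s and swap operators that must be brought to the form $\bigl(\prod\iota_j\bigr)\circ\tau_\sigma\circ\bigl(\prod\pi_i\bigr)$. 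This is exactly the bookkeeping of "following a through string" in step \eqref{Mult4}: a $\pi_i$ from $\Omega_1$ followed by the composite structure of $\Omega_2$ either hits an $\iota$ (cancelling, possibly after a detour through $\kappa$'s and $\psi$'s, which is where isolated vertices get created as in step \eqref{Mult5}) or it survives as a projection in $\Omega$. Step three: once the word is in the shape $\bigl(\prod\psi\bigr)\circ\bigl(\prod\iota_j\bigr)\circ\tau_{\sigma}\circ\bigl(\prod\pi_i\bigr)\circ\bigl(\prod\kappa\bigr)$, the indices of the $\iota$'s and $\pi$'s may be out of order relative to the canonical total order of \eqref{mapconvert}; applying the commutation relation (iv) to sort them produces exactly the spin-Clifford corrections of steps \eqref{Mult8}--\eqref{Mult9}, and one checks the "minimal number of steps / smallest pairs across rows first" convention is forced by the fact that the final answer is canonically the map $f_\Omega$, independent of how one sorts.

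The main obstacle I anticipate is step three: verifying that the spin-Clifford relation $\Omega+\Omega' = 2\Omega''$ is the correct and only relation needed to reconcile an out-of-order product of $\pi_i\circ\iota_j$'s with the canonical ordering. This requires showing that $\pi_i\circ\iota_{i+1} + \pi_{i+1}\circ\iota_i = 2\,(\text{contraction-then-nothing})$ as maps, or the appropriate cross-row analogue, using \eqref{DXCommute} to convert $X_vD_\lambda + D_\lambda X_v$ into the scalar $\lambda(v)$ summed against the invariant element defining $\psi$ or $\kappa$; getting the coefficient $2$ rather than $1$ or $\sqrt 2$ exactly right, and confirming that no sign discrepancy arises from the supercommutation of $D$ with the $X_v$'s and $D_\lambda$'s, is the delicate part. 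A secondary subtlety is well-definedness: since a diagram product may be reached by several sequences of swaps, I must check the composite map does not depend on the chosen sequence — but this is automatic once one knows both the map side is manifestly independent of bookkeeping and the combinatorial side has a deterministic rule (steps \eqref{Mult8}--\eqref{Mult9}), so the two determine the same element. I would close by remarking that associativity of $\SB_n(\delta)$ for general $\delta$ then follows, as already indicated after the associativity theorem, by polynomiality in $\delta$ together with the cases $\delta = N \geq 2n$ where Theorem \ref{Iso} gives an isomorphism with an associative endomorphism algebra.
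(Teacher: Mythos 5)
Your proposal follows essentially the same route as the paper: reduce to basis diagrams by linearity, verify by direct computation on $\bV^{\otimes n}\otimes\Delta$ that closed circuits give a factor of $\delta=N$ (your relation $\pi\circ\iota=\delta\cdot\mathrm{id}$ and its variants are the paper's Lemmas \ref{CS2}--\ref{CS5}), that through strings ending in isolated vertices convert into projections/injections (Lemma \ref{TSLemma}), and that reordering out-of-order projections and injections via \eqref{DXCommute} produces exactly the spin-Clifford relation with coefficient $2$ (Lemma \ref{CliffEquivariant}). The only organizational difference is that the paper disposes of the purely Brauer-type configurations by citing that $\mathbf{B}_n(N)$ is a subalgebra and invoking Brauer's original result, rather than reverifying them through generator relations as you propose.
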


\begin{proof}
By linearity it suffices to verify $f_{\Omega_2\Omega_1} = f_{\Omega_2} \circ f_{\Omega_1}$ for $\Omega_1,\Omega_2 \in \cB(\SB_n(N))$ and for an arbitrary basis element of $\bV^{\otimes n} \otimes \Delta$.

Let $\Omega_1,\Omega_2 \in \cB(\SB_n(N))$.  Notice, our multiplication construction agrees with Brauer's \cite{brauer1937algebras}.  That is, ${\bf B}_n(N)$ is a subalgebra of $\SB_n(N)$. As a result, we know our theorem holds for any components of the diagrams that appear in Brauer diagrams. We may therefore assume $\Omega_1$ and $\Omega_2$ do not have any through strings creating a path from the top row of $\Omega_1$ to the bottom row of $\Omega_2$.  Furthermore, we can assume there are no arcs that form closed circuit \eqref{Circ I}.
%
%
%where through strings in $\Omega_1$ and $\Omega_2$ connect the top row of $\Omega_1$ to the bottom row of $\Omega_2$.  Furthermore, the maps agree where we contract and immerse the same tensor positions (this is the closed circuit \ref{Circ I}). 

Now, if we consider the composition $f_{\Omega_2} \circ  f_{\Omega_1}$ we need to simplify the maps so that the composition is of the form (\ref{mapconvert}).  This will correspond to a sum of maps which we will show is $f_{\Omega_2 \Omega_1}$. 

Notice the spin projections and contractions in the first row of $\Omega_1$ must remain as do the spin injections and immersions in the bottom row of $\Omega_2$. Indeed, it suffices to compute what Koike calls the ``inside homomorphism".  That is, the compositions of maps between the spin contractions and projections of $\Omega_1$ and the spin injections and immersions of $\Omega_2$. These are the maps that resolve the bottom row of the top diagram and top row of the bottom diagram.  For further discussion we refer the reader to \cite[\S 9]{koike2005spin}.

By assumption, we can follow every through string originating from a vertex in the top row of $\Omega_1$ or bottom row of $\Omega_2$ to an isolated vertex, as we have shown all other parts of our multiplication agree with composition in $\End_{\Pin(N)}(\bV^{\otimes n} \otimes \Delta)$. By the above observations, to prove the theorem, it remains to prove:
\begin{enumerate}[(1)]
\item All the closed circuits correspond to scaling by $\dim(V)$. 
\item Through strings leading to isolated vertices become isolated.  
\end{enumerate}
These are precisely the remaining parts of our combinatorial multiplication that are not handled by the above observations. To prove part (2), we must also show the Spin-Clifford relation agrees with the corresponding composition of maps.  In summary, the theorem breaks down into a series of lemmas.

\begin{lemma}\label{CS2}
Closed circuit (\ref{Circ II}) corresponds to scaling by $N= \dim(\bV)$.
\end{lemma}

\begin{proof}
We are considering a closed circuit in our diagram, so no through strings will begin or end in any of our vertices.  Equivalently, after applying the first projections maps we project away all of the entries in these tensor positions. We will use this fact in all of the following lemmas. We keep track of entries we project away with a dash. For example, if we contract the first and third tensor position of $v_1 \otimes v_2 \otimes v_3 \otimes a$, we write $\omega(v_1,v_3) (- \otimes v_2 - \otimes a)$.

As noted, it suffices by linearity of our maps to prove this lemma for simple tensors. Furthermore, because we sum over all basis vectors, we can permute the indices in the sum corresponding to every closed circuit of the form \eqref{Circ II} so that the sum resembles the example given in \eqref{Circ II}. Accordingly, it suffices to prove the result for this example. Suppose our circuit has length $k < n$.  Then, circuit \eqref{Circ II} corresponds to the maps 
\[
\kappa_{k,k-1} \circ \cdots \circ \kappa_{1,2} \circ \psi_{k-2,k-1} \circ \cdots \circ \psi_{2,3} \circ \iota_k \circ \iota_1. 
\]
Let $-\otimes \cdots \otimes - \otimes a \in \bV^{\otimes k} \otimes \Delta$ with $a = w_{i_1} \wedge \cdots \wedge w_{i_\ell}$ a simple tensor in the basis for $\Delta$.  Here we only consider the $k$ tensor positions in $\bV^{\otimes n} \otimes \Delta$ involved in our closed circuit.  First, suppose $\dim(\bV) = 2m$.

When we apply all the injections we have
{\small
\begin{align*}
2 \sum_{\substack{i_1,i_2=1\\ j_1,\dots,j_{(k-2)/2=1}}}^m 
%&w_{i_1} \otimes w_{j_1} \otimes w_{j_1}^\ast \otimes \cdots \otimes w_{j_{(k-2)/2}} \otimes w_{j_{(k-2)/2}}^\ast \otimes w_{i_2} \otimes D_{w_{i_2}^*} \circ D_{w_{i_1}^*}(a)+\\
%&
%w_{i_1} \otimes w_{j_1} \otimes w_{j_1}^\ast \otimes \cdots \otimes w_{j_{(k-2)/2}} \otimes w_{j_{(k-2)/2}}^\ast \otimes w_{i_2}^\ast \otimes D_{w_{i_2}^*} \circ X_{w_{i_1}}(a)+\\
&
w_{i_1}^\ast \otimes w_{j_1} \otimes w_{j_1}^\ast \otimes \cdots \otimes w_{j_{(k-2)/2}} \otimes w_{j_{(k-2)/2}}^\ast \otimes w_{i_2} \otimes X_{w_{i_2}} \circ D_{w_{i_1}^\ast}(a)+\\
%&
%w_{i_1}^\ast \otimes w_{j_1} \otimes w_{j_1}^\ast \otimes \cdots \otimes w_{j_{(k-2)/2}} \otimes w_{j_{(k-2)/2}}^\ast \otimes w_{i_2}^\ast \otimes X_{w_{i_2}} \circ X_{w_{i_1}} (a)+\\
%&
%w_{i_1} \otimes w_{j_1}^\ast \otimes w_{j_1} \otimes \cdots \otimes w_{j_{(k-2)/2}}^\ast \otimes w_{j_{(k-2)/2}} \otimes w_{i_2} \otimes D_{w_{i_2}^*} \circ D_{w_{i_1}^*}(a)+\\
&
w_{i_1} \otimes w_{j_1}^\ast \otimes w_{j_1} \otimes \cdots \otimes w_{j_{(k-2)/2}}^\ast \otimes w_{j_{(k-2)/2}} \otimes w_{i_2}^\ast \otimes D_{w_{i_2}^*} \circ X_{w_{i_1}}(a)+\\
&\cdots .
%&
%w_{i_1}^\ast \otimes w_{j_1}^\ast \otimes w_{j_1} \otimes \cdots \otimes w_{j_{(k-2)/2}}^\ast \otimes w_{j_{(k-2)/2}} \otimes w_{i_2} \otimes X_{w_{i_2}} \circ D_{w_{i_1}^\ast}(a)+\\
%&
%w_{i_1}^\ast \otimes w_{j_1}^\ast \otimes w_{j_1} \otimes \cdots \otimes w_{j_{(k-2)/2}}^\ast \otimes w_{j_{(k-2)/2}} \otimes w_{i_2}^\ast \otimes X_{w_{i_2}} \circ X_{w_{i_1}} (a) + \cdots
\end{align*}}
There are far more terms in the sum corresponding to all the possible permutations of the spin immersions.  However, it suffices to consider the terms that alternate between elements of $\bW$ and $\bW^\ast$.  Indeed, $\bW$ and $\bW^\ast$ are isotropic, so when we apply the spin contraction the only pairs of basis elements that survive are $w_{i_1} \otimes w_{j_k}^\ast$ where $i_1 = j_k$.

When we apply the contraction map to tensor positions $1$ and $2$, it equates the indices in the sum. We record this by reindexing the sum from $i_1 = j_1=1$ to $m$.  We continue applying the contractions. Each time we apply a contraction, we equate two more indices. When we apply the final contraction map, we have equated all of the indices and projected every tensor position.  In the end, we have forced the string of equalities $i_1 = j_1 = j_2 = \cdots = j_{(k-2)/2} = i_2$. Our sum is now  
\begin{align*}
2\sum_{i=1}^m &-\otimes\cdots\otimes-\otimes X_{w_i} \circ D_{w_{i}^\ast}(a) +\\
&-\otimes\cdots\otimes-\otimes D_{w_i^\ast} \circ X_{w_{i}}(a).
\end{align*}
The map $X_{w_i} \circ D_{w_{i}^\ast}(a)$ will be the identity if $w_i$ is one of the components of $a \in \Delta$ and zero otherwise. On the other hand, $D_{w_i^\ast} \circ X_{w_{i}}(a)$ is the identity when $w_i$ is not a component of $a$ and zero otherwise.  As we sum over all basis vectors, we get one copy of $-\otimes\cdots\otimes-\otimes a$ for each index $i=1,\dots,m$.  This leaves us with
\begin{equation}\label{eq1CS2}
(2m)(-\otimes\cdots\otimes-\otimes a).
\end{equation}
This is precisely the identity map scaled by $2m = \dim(\bV)$.

If $\dim(\bV) = 2m+1$, we are in a similar situation. However, now we have additional terms corresponding to the spanning element $\be$. In particular, the immersions now contain an additional $\be \otimes \be$.  When we contract a term containing $\be$, if any other tensor position is not $\be$ the tensor vanishes. As a result, one additional term in the sum will be nonzero after the spin contractions. This is the term where every tensor position contains $\be$,
\[
 \be \otimes \be \otimes \cdots \otimes \be \otimes \be \otimes \be \otimes D\circ D(a).
\]
Notice that $D \circ D$ is the identity.  So when we apply the spin contraction to each of these positions what remains is
\[
- \otimes \cdots \otimes - \otimes a,
\]
one additional copy of our original tensor.  Adding this to \eqref{eq1CS2}, we see the closed circuit corresponds to the map
\[
- \otimes \cdots - \otimes a \mapsto (2m+1) (- \otimes \cdots - \otimes a).
\]
Once again, this is precisely the identity map scaled by $2m+1 = \dim(\bV)$. \qed
\end{proof}

\begin{lemma} \label{CS3}
Closed circuit (\ref{Circ III}) corresponds to scaling by $N= \dim(\bV)$.
\end{lemma}

\begin{proof}
We proceed as in Lemma \ref{CS2}. Suppose our circuit has length $k < n$.  We can permute the indices in the sum corresponding to every closed circuit of the form \eqref{Circ III} so that the sum equals the example given in \eqref{Circ III}. Accordingly, it suffices to prove this result for this example. Circuit \eqref{Circ III} corresponds to the map
\[
\pi_k \circ \pi _1 \circ \kappa_{2,3} \circ \cdots \circ \kappa_{k-2,k-1} \circ \psi_{k,k-1} \circ \cdots \circ \psi_{1,2}.
\]
After applying the immersions we have
{\small\begin{align*}
- \otimes \cdots \otimes - \otimes a \mapsto &\sum_{j_1,\dots,j_{k/2}=1}^m &w_{j_1} \otimes w_{j_1}^\ast \otimes w_{j_2} \otimes w_{j_2}^\ast \otimes \cdots \otimes w_{j_{k/2}} \otimes w_{j_{k/2}}^\ast \otimes a+\\
&& w_{j_1}^\ast \otimes w_{j_1} \otimes w_{j_2}^\ast \otimes w_{j_2} \otimes \cdots \otimes w_{j_{k/2}}^\ast \otimes w_{j_{k/2}} \otimes a+\\
&& \cdots .
\end{align*}}
The remaining terms will all vanish when we apply the spin contractions because some pair will contain two elements from $\bW$ or $\bW^\ast$. Applying the contractions forces equality among the indices, i.e. $j_1 = j_2 = \cdots = j_{k/2}$. This gives us
\begin{align*}
\sum_{i=1}^m &w_{i} \otimes - \otimes - \otimes - \otimes \cdots \otimes - \otimes w_{i}^\ast \otimes a+\\
& w_{i}^\ast \otimes - \otimes - \otimes - \otimes \cdots \otimes - \otimes w_{i} \otimes a.
\end{align*}
Now if we apply our spin projections we recognize the same sum from Lemma \ref{CS2},
\begin{align*}
2 \sum_{i=1}^m & - \otimes - \otimes - \otimes - \otimes \cdots \otimes - \otimes - \otimes D_{w_i^\ast} X_{w_i}(a)+\\
& - \otimes - \otimes - \otimes - \otimes \cdots \otimes - \otimes - \otimes X_{w_i} D_{w_i^\ast}(a).
\end{align*}
This sum is precisely $(2m)(-\otimes \cdots \otimes - \otimes a)$. So we scale by $\dim(\bV)$.  When $\dim(\bV) = 2m+1$ there will be one more term that does not vanish when we apply the spin contractions,
\[
\be \otimes \be \otimes \cdots \otimes \be \otimes a.
\]
After the contractions and spin projections this term is sent to
\[
- \otimes - \otimes \cdots \otimes - \otimes DD(a) = - \otimes - \otimes \cdots \otimes - \otimes a.
\]
Thus if $\dim(\bV) = 2m+1$ this closed circuit corresponds to the map sending  $-\otimes \cdots \otimes - \otimes a$ to $(2m+1)(-\otimes \cdots \otimes - \otimes a)$. \qed
\end{proof}

\begin{lemma} \label{CS4}
Closed circuit (\ref{Circ IV}) corresponds to scaling by $N= \dim(\bV)$.
\end{lemma}

\begin{proof}
We proceed as in Lemmas \ref{CS2} and \ref{CS3}.  Suppose our circuit has length $k < n$.  Then circuit \eqref{Circ IV} corresponds to the map
\[
\pi_k \circ \kappa_{1,2} \circ \cdots \circ \kappa_{k-2,k-1} \circ \psi_{2,3} \circ \cdots \circ \psi_{k-1,k} \circ \iota_1.
\]
Applying all the immersions,
{\small \begin{align*}
- \otimes \cdots \otimes - \otimes a \mapsto \sqrt{2} \sum_{i,j_1,\dots,j_{k-1/2}=1}^m &w_i \otimes w_{j_1}^\ast \otimes w_{j_1} \otimes \cdots \otimes w_{j_{k-1/2}}^\ast \otimes w_{j_{k-1/2}} \otimes D_{w_i^\ast}(a)+\\
& w_i^\ast \otimes w_{j_1} \otimes w_{j_1}^\ast \otimes \cdots \otimes w_{j_{k-1/2}} \otimes w_{j_{k-1/2}}^\ast\otimes X_{w_i}(a) +\\
&\be \otimes \be \otimes \be \otimes \cdots \otimes \be \otimes \be \otimes D(a).
\end{align*}}
Applying the contractions equates all of the indices.  Then when we apply the spin projection to the $k^{th}$ tensor position we have
\begin{align*}
2 \sum_{i=1}^m &- \otimes - \otimes \cdots \otimes - \otimes X_{w_i} D_{w_i^\ast}(a)+\\
& - \otimes - \otimes \cdots \otimes - \otimes D_{w_i^\ast}X_{w_i} (a)+\\
& - \otimes - \otimes \cdots \otimes - \otimes DD(a).
\end{align*}
This is the sum we saw in the previous two lemmas. Using the same reasoning, we can conclude this closed circuit results in scaling by $\dim(\bV)$. \qed
\end{proof}

\begin{lemma} \label{CS5}
Closed circuit (\ref{Circ V}) corresponds to scaling by $N= \dim(\bV)$.
\end{lemma}

\begin{proof}
Apply the same proof as in Lemma \ref{CS4}, but we now inject into the last tensor position and project from the first. \qed
\end{proof}

\begin{lemma} \label{TSLemma}
If we follow a through string to an isolated vertex this corresponds to replacing the originating vertex of the through string with the corresponding isolated vertex.
\end{lemma}

\begin{proof}
It suffices to consider two cases:
\begin{enumerate}[(1)] 
\item When the through string terminates directly in an isolated vertex.
\item When the through string is connected to an isolated vertex via one spin contraction.
\end{enumerate}
Indeed, if we travel along multiple immersions and contractions to reach an isolated vertex as in
{\tiny
\[
\xymatrix{
&\\
&\bullet \ar@{-}[ul] & \bullet \ar@{-}@/^/[r] &\bullet &\odot\;\;\\
&\bullet \ar@{-}@/_/[r] &\bullet  &\bullet \ar@{-}@/_/[r] &\bullet ,
}
\]}
we equate all of the indices we introduced as in the proofs of Lemmas \ref{CS2}-\ref{CS5}. The corresponding sum is the same as the sum associated to the maps in the diagram
{\tiny
\[
\xymatrix{
&\\
&\bullet \ar@{-}[ul]  &\odot\;\\
&\bullet \ar@{-}@/_/[r] &\bullet.
}
\]}
The same reasoning applies for any diagram with the terminal isolated vertex in the second row.  In this case, the sum corresponding to these diagrams reduces to the sum associated to
{\tiny
\[
\xymatrix{
&\\
&\bullet \ar@{-}[ul]\;\;\;\; \\
&\odot^{n_i}.
}
\]}
Consider a diagram of this type.  The isolated vertex corresponds to $n_i$ in the total order. Suppose the through string originates in tensor position $k$ and terminates in tensor position $\ell$.  Clearly, the following two operations are equivalent
\begin{itemize}
\item Send a vector $v$ in tensor position $k$ to tensor position $\ell$ then apply a spin projection to position $\ell$ in the appropriate order corresponding to $n_i$.
\item Project the vector $v$ from tensor position $k$ as the $n_i^{th}$ projection.  
\end{itemize}
This corresponds to replacing the origin of the through string with the isolated vertex of index $n_i$. Now consider the second case.  That is,
{\tiny
\[
\xymatrix{
&\\
&\bullet \ar@{-}[ul]  &\odot^{n_i}\\
&\bullet \ar@{-}@/_/[r] &\bullet.\;
}
\]}
This is a slightly more interesting case because a spin immersion becomes a spin projection. To see how this occurs, suppose the through string originates in tensor position $k$ and terminates in position $\ell$.  When we consider the corresponding maps, we have
\[
v \otimes - \otimes - \otimes a \mapsto - \otimes v \otimes - \otimes a.
\]
Without loss of generality, we assume the first tensor position is position $k$, the second is $\ell$ and the last is $\ell + 1$. We can do this because the maps we consider only affect these tensor positions.  Furthermore, any application of linear operators in the spin representation tensor position occur consecutively, so we may isolate these maps. When we apply the spin injection we have
\begin{equation} \label{TSEQ1}
\sqrt{2} \left[ \sum_{i=1}^m - \otimes v \otimes w_i \otimes D_{w_i^\ast}(a) + - \otimes v \otimes w_i^\ast \otimes X_{w_i}(a) \right] + - \otimes v \otimes e \otimes D(a).
\end{equation}
Suppose $v = \sum_{i=1}^m \alpha_i w_i + \beta_i w_i^\ast + \gamma e$.  The contraction projects tensor positions $\ell$ and $\ell+1$ and scales by the bilinear form applied to these positions. The only nonzero terms are of the form $\alpha_i \omega(v, w_i^\ast)$, $\beta_i \omega(v,w_i)$ and $\gamma \omega(v,e)$. Here $\omega$ is the bilinear form defined in Section \ref{Spinor Rep}. The sum in \eqref{TSEQ1} becomes
\[
\sqrt{2} \left[ \sum_{i=1}^m - \otimes - \otimes - \otimes \beta_i D_{w_i^\ast}(a) + - \otimes - \otimes - \otimes \alpha_i X_{w_i} \right] + - \otimes - \otimes - \otimes \gamma D(a).
\]
This is precisely the spin projection applied to the vector $v$ in position $k$. \qed
\end{proof}

\begin{lemma}\label{CliffEquivariant}
The spin-Clifford relation (Definition \ref{CliffordDef}) when applied to a diagram resulting from step \eqref{Mult7} of the multiplication process corresponds to interchanging the two corresponding spin injection(s) and/or projection(s).
\end{lemma}

\begin{proof}
Assume we have completed step \eqref{Mult7} of the multiplication process for some spin-Brauer diagrams. This produces a diagram, not necessarily spin-Brauer.  Lemmas \ref{CS2}-\ref{TSLemma} imply the simplifications in steps \eqref{Mult2}-\eqref{Mult7} agree with the corresponding simplification of maps.  Hence, these diagrams correspond to some map in the centralizer with the spin projections and injections out of order.  We will show that swapping the order of these maps corresponds to the spin-Clifford relation.

We first address swapping the order of isolated vertices across rows. If we want to swap two isolated vertices that occur in the same vertex position of the top and bottom row, i.e. a diagram of the form
{\tiny
\begin{equation} \label{SCEQ1}
\xymatrix@R=.06cm{
&\bullet \ar@{-}@/^/[r] &\bullet &\odot^2\;\\
\delta&&&\\
&\bullet \ar@{-}@/_/[r] &\bullet&  \odot^1.
}
\end{equation}}
This diagram must come from a composition where the isolated vertices follow through strings, for example a composition of the form
{\tiny
\begin{equation} \label{SCEQ2}
\xymatrix @R=.6pc{
\bullet \ar@{-}@/^/[r] & \bullet &\bullet\ar@{-}[ldd]\; \\
&&\\
\odot^1 &\bullet &  \odot^2\;\\
\odot^1 & \odot^2 &\bullet\; \ar@{-}[dd]\\
&&\\
\bullet \ar@{-}@/_/[r] &\bullet &  \bullet
.}
\end{equation}}
In this case, we notice that the entries in the tensor positions will remain the same. What we mean is that we will spin project the original entry in position three and then spin inject. What will change is the order of composition of the maps in the spin representation tensor position.  

If the isolated vertices are ordered consecutively, this means the corresponding linear operators will be composed consecutively. As a result, swapping the isolated vertex indexing corresponds to swapping the order of the composition of the linear operators in the spin representation tensor position.  Thus, the spin-Clifford relation reduces to the super commutativity of the operators.  

These linear operators, however, are not strictly super-commutative. $X_v$ and $D_\lambda$ satisfy the identity \eqref{DXCommute}. As a result, when we swap we must account for the other terms that appear in the sum. 

Suppose we want to apply a spin injection into position $\ell$ and projection from position $k$.  It suffices to restrict our attention to these tensor positions. So we consider $v \otimes - \otimes a$ where $v$ is in tensor position $k$ and we spin inject into tensor position $\ell$. We may assume this position has been projected away.   Indeed, if this is not the case it implies the diagram did not result from step \eqref{Mult7} in the multiplication of two diagrams. 

Let $v = \sum_{i=1}^m \alpha_i w_i + \beta_i w_i^\ast + \gamma e$. When we spin project $v$, it corresponds to applying the linear operators $\sqrt{2} \sum_{i=1}^m \alpha_i X_{w_i} + \beta_i D_{w_i^\ast} + \gamma D$ in the spin representation tensor position. If we spin inject into position $\ell$ then spin project position $k$ we have,
{\small \begin{align}
\begin{split}\label{SCEQ3}
 2 \sum_{j=1,i=1}^m &- \otimes \alpha_i w_j^\ast \otimes X_{w_i} X_{w_j}(a) + - \otimes \alpha_i w_j \otimes X_{w_i} D_{w_j^\ast}(a) + - \otimes \alpha_i e \otimes X_{w_i} D(a)\\
&+ - \otimes \beta_i w_j^\ast \otimes D_{w_i^\ast} X_{w_j}(a)+ - \otimes \beta_i w_j \otimes D_{w_i^\ast} D_{w_j^\ast}(a) +- \otimes  \beta_i e \otimes D_{w_i^\ast} D(a)\\
&+(\sqrt{2}) \left[- \otimes \gamma w_j^\ast \otimes D X_{w_j}(a) + - \otimes \gamma w_j \otimes D D_{w_j^\ast}(a) \right] + - \otimes \gamma e \otimes DD(a).
\end{split}
\end{align}}
Once again, we only consider the $k$ and $\ell$ tensor positions as well as the spin representation tensor position. When we swap all of these operators, we have the same sum but scaled by $(-1)$.  However, in any positions containing the linear operators $X_{w_i}$ and $D_{w_j^\ast}$, we get an extra term from \eqref{DXCommute} when $i=j$.  Explicitly, let $\Omega'$ be the sum in \eqref{SCEQ3}. When we swap all the operators we have
\[
-\Omega' + 2 \sum_{i=1}^m - \otimes \alpha_i w_i \otimes a + - \otimes \beta_i w_i^\ast \otimes a + - \otimes \gamma e \otimes a.
\]
Notice this simplifies to $-\Omega' +2( - \otimes v \otimes a)$ where $v$ is now in the $\ell$-tensor position and all of the other components of the diagram and corresponding maps are the same.  This shows that the following operations correspond:
\begin{itemize}
\item Swap the order of composition of a consecutively ordered spin projection and injection.
\item In the corresponding diagram, swap the indices of the isolated vertices and scale $(-1)$. Furthermore, add the diagram where we draw a through string between the isolated vertices we swapped.
\end{itemize}
This is the spin-Clifford relation.

If we wish to swap two spin injections in the bottom row the same proof applies.  Indeed, when we swap the indexing of consecutive isolated vertices, it corresponds to swapping the order of composition of the corresponding linear operators.  Once again, we scale by $(-1)$ and account for the additional terms that arise. However, in this case the additional terms result in the following sum,
\[
- \Omega ' + 2 \sum_{i=1}^m w_i \otimes w_i^\ast \otimes a + w_i^\ast \otimes w_i \otimes a + \be \otimes \be \otimes a.
\]
This second summand is exactly the spin immersion into the corresponding tensor positions. When $\dim(\bV)$ is even the same proof applies removing all of the terms that contain the vector $\be$. This establishes the spin-Clifford relation. \qed
\end{proof}

Lemmas \ref{CS2}-\ref{TSLemma} show that simplifying the inside homomorphism agrees with our multiplication structure. We then have a composition of the form (\ref{mapconvert}), but the projections and injections are not in the correct order. Lemma \ref{CliffEquivariant} proves that placing the spin projection and injection maps into the correct order agrees with the operation for swapping the indices in the diagrams.  As a result, each step of the simplification agrees.  We conclude that $f_{\Omega_2} \circ f_{\Omega_1} = f_{\Omega_2\Omega_1}$. \qed
\end{proof}

{\em Proof of Theorem \ref{TheoremA}}. Combine Theorem \ref{Iso} with Theorem \ref{THMMultAgrees}. \qed

\section{Cellularity of $\SB_n(\delta)$} \label{Cellularity}
For certain parameters $\delta= N$ and $n$ we just showed $\SB_n(N)$ surjects onto $\End_{\Pin(N)}(\bV^{\otimes n} \otimes \Delta)$ and is an isomorphism for $N \geq 2n$.  We will now prove $\SB_n(\delta)$ is a cellular algebra over any field $\bk$.  This will allow us to parametrize all of its irreducible representations.

Throughout this section we fix a field $\bk$.  Furthermore, $\bk\Sigma_\ell$ will denote the symmetric group algebra over $\bk$ on $\ell$ letters.

We refer the reader to Graham-Lehrer \cite{graham1996cellular} for the classical definition of a cellular algebra.  We will use K\"onig and Xi's basis-free characterization \cite{KX} to prove cellularity of $\SB_n(\delta)$.  Before we state this definition we need the following terminology.

\begin{definition}\cite[Definition 3.2]{xi1999partition}
Let $A$ be an $\bk$-algebra.  Assume there is an involution $i$ on $A$.  A two-sided ideal $J$ in $A$ is called a {\bf cell ideal} if and only if $i(J) = J$ and there exists a left ideal $\nabla \subset J$ such that $\nabla$ is finitely generated and free over $\bk$ and that there is an isomorphism of $A$-bimodules $\alpha: J \simeq \nabla \otimes_\bk i(\nabla)$ making the following diagram commute
\[
\xymatrix{
J \ar[rr]^\alpha \ar[d]_i &&\nabla \otimes_R i(\nabla) \ar[d]^{x \otimes y \mapsto i(y) \otimes i(x)}\\
J \ar[rr]^\alpha &&\nabla \otimes_R i(\nabla).
}
\]
The algebra $A$ (with involution $i$) is called {\bf cellular} if and only if there is an $k$-module decomposition $A = J_1' \oplus J_2' \oplus \cdots \oplus J_n'$ (for some $n$) with $i(J_j') =J_j'$ for each $j$ and such that setting $J_j = \bigoplus_{i=1}^j J_i'$ gives a chain of two-sided ideals of $A$: $0 = J_0 \subset J_1 \subset \cdots \subset J_n = A$ (each fixed by $i$) and for each $j$ the quotient $J_j' = J_j/J_{j-1}$ is a cell ideal with respect to the involution induced by $i$ on the quotient $A/J_{j-1}$.
\end{definition}

The $\nabla$ associated to each $J_j/J_{j-1}$ are called {\bf cell modules} or in \cite{graham1996cellular} {\bf cell representations}. With this terminology we recall an important lemma.

\begin{lemma}[Lemma 3.3, \cite{xi1999partition}] \label{NoBasisCellular}
Let $A$ be an algebra with an involution $i$.  Suppose there is a decomposition
\[
A = \bigoplus_{j=1}^m \bV_j \otimes_\bk \bV_j \otimes_k B_j
\]
where $\bV_j$ is a vector space and $B_j$ is a cellular algebra with respect to an involution $\alpha_j$ and a cell chain $J_1^{(j)} \subset \cdots \subset J_{s_j}^{(j)} = B_j$ for each $j$.  Define $J_t = \bigoplus_{j=1}^t \bV_j \otimes_k \bV_j \otimes_k B_j$.  Assume that the restriction of $i$ on $\bV_j \otimes_k \bV_j \otimes_k B_j$ is given by $w \otimes v \otimes b \mapsto v \otimes w \otimes \sigma_j(b)$.  If for each $j$ there is a bilinear form $\phi_j \colon V_j \otimes_k V_j \to B_j$ such that $\sigma_j(\phi_j(w,v)) = \phi_j(v,w)$ for all $w,v \in \bV_j$ and that the multiplication of two elements in $\bV_j \otimes \bV_j \otimes B_j$ is governed by $\phi_j \pmod{J_{j-1}}$, that is, for $x,y,u,v \in \bV_j$ and $b,v \in B_j$, we have
\[
(x \otimes y \otimes b)(u \otimes v \otimes v) = x \otimes v \otimes b \phi_j(y,u) c
\]
$\pmod {J_{j-1}}$ and if $\bV_j \otimes \bV_j \otimes J_\ell^{(j)} + J_{j-1}$ is an ideal in $A$ for all $\ell$ and $j$, then $A$ is a cellular algebra.
\end{lemma}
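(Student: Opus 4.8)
This statement is K\"onig and Xi's basis-free criterion for cellularity (Lemma 3.3 of \cite{xi1999partition}); the plan is to prove it by producing an explicit cell chain for $A$, obtained by \emph{refining} the given filtration $0 = J_0 \subset J_1 \subset \cdots \subset J_m = A$ with the cell chains of the $B_j$. Concretely, between $J_{j-1}$ and $J_j$ one inserts the submodules
\[
K_{j,\ell} \;:=\; \bV_j \otimes_\bk \bV_j \otimes_\bk J_\ell^{(j)} + J_{j-1}, \qquad \ell = 0, 1, \dots, s_j,
\]
so that $K_{j,0} = J_{j-1}$ (since $J_0^{(j)} = 0$) and $K_{j,s_j} = J_j$. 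Each $K_{j,\ell}$ is a two-sided ideal of $A$ by hypothesis, and it is stable under $i$: indeed $i$ restricts on every layer $\bV_{j'} \otimes \bV_{j'} \otimes B_{j'}$ to $w \otimes v \otimes b \mapsto v \otimes w \otimes \sigma_{j'}(b)$, and the cell chain $J_\bullet^{(j)}$ of the cellular algebra $B_j$ is stable under its involution $\sigma_j$. Thus the $K_{j,\ell}$ form a chain of $i$-stable two-sided ideals of $A$ refining $J_\bullet$, and it remains to check that each successive quotient $K_{j,\ell}/K_{j,\ell-1}$ is a cell ideal in $A/K_{j,\ell-1}$ for the induced involution.

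Fix $j$ and $\ell$, and abbreviate $\sigma = \sigma_j$, $\phi = \phi_j$. Since the summand $\bV_j \otimes_\bk \bV_j \otimes_\bk J_\bullet^{(j)}$ meets $J_{j-1}$ trivially, there is a canonical $\bk$-module isomorphism $K_{j,\ell}/K_{j,\ell-1} \simeq \bV_j \otimes_\bk \bV_j \otimes_\bk \bigl(J_\ell^{(j)}/J_{\ell-1}^{(j)}\bigr)$. Cellularity of $B_j$ provides a $B_j$-bimodule isomorphism $J_\ell^{(j)}/J_{\ell-1}^{(j)} \simeq \Theta \otimes_\bk \sigma(\Theta)$, where $\Theta = \Theta_{j,\ell}$ is the associated cell module of $B_j$ (a left $B_j$-module, finitely generated and free over $\bk$), $\sigma(\Theta)$ carries the $\sigma$-twisted right action, and the involution induced by $\sigma$ sends $x \otimes y \mapsto y \otimes x$ with the factors reinterpreted through $\sigma$. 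Regrouping the tensor factors yields a $\bk$-module isomorphism
\[
K_{j,\ell}/K_{j,\ell-1} \;\simeq\; \bigl(\bV_j \otimes_\bk \Theta\bigr) \otimes_\bk \bigl(\bV_j \otimes_\bk \sigma(\Theta)\bigr).
\]
Put $\nabla := \bV_j \otimes_\bk \Theta$; it is finitely generated and free over $\bk$, the right-hand factor is then $i(\nabla)$, and using the explicit formula $w \otimes v \otimes b \mapsto v \otimes w \otimes \sigma(b)$ for $i$ together with the symmetry $\sigma(\phi(w,v)) = \phi(v,w)$, one sees that the involution induced by $i$ on $K_{j,\ell}/K_{j,\ell-1}$ is $\xi \otimes \eta \mapsto i(\eta) \otimes i(\xi)$ --- precisely the commuting square in the definition of a cell ideal.

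The heart of the argument is to identify the $A$-bimodule structure on $K_{j,\ell}/K_{j,\ell-1}$ with the standard one on $\nabla \otimes_\bk i(\nabla)$: one must check that the left $A$-action has the form $a \cdot (\xi \otimes \eta) = (a\xi) \otimes \eta$ for some left $A$-module structure on $\nabla$, and symmetrically on the right. Working modulo $K_{j,\ell-1}$, the rule $(x \otimes y \otimes b)(u \otimes v \otimes c) = x \otimes v \otimes b\,\phi(y,u)\,c \pmod{J_{j-1}}$ says that a left multiplier from the $j$-th layer sends $u \mapsto x$ and $c \mapsto b\,\phi(y,u)\,c$ while leaving $v$ untouched; and since $J_\ell^{(j)}$ is a two-sided ideal of $B_j$, multiplication by $b\,\phi(y,u) \in B_j$ preserves the cell filtration and so descends to $\Theta$. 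Hence the $j$-th layer acts on $\nabla = \bV_j \otimes_\bk \Theta$ by $(x \otimes y \otimes b) \cdot (u \otimes \theta) = x \otimes (b\,\phi(y,u))\theta$, which one checks to be associative; the contributions of the remaining layers are fixed by the ambient multiplication in $A$ and, because the $K_{j,\ell}$ are ideals, act again only on the left factor. Granting this, $K_{j,\ell}/K_{j,\ell-1} \simeq \nabla \otimes_\bk i(\nabla)$ as $A$-bimodules with the required commuting square, so it is a cell ideal and $A$ is cellular, with cell modules $\{\nabla_{j,\ell}\}$.

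The main obstacle is exactly this last identification --- showing the ambient action on each subquotient factors through the outer tensor factors of $\nabla \otimes_\bk i(\nabla)$. Both hypotheses are indispensable here: the ideal property of $\bV_j \otimes \bV_j \otimes J_\ell^{(j)} + J_{j-1}$ is what makes the refined filtration a chain of ideals and what lets the $j$-th layer act on the cell module $\Theta$, while the $\phi_j$-governed multiplication rule is what guarantees the ``inert'' tensor factor ($v$ on the left, and its mirror on the right) is genuinely untouched. Once these are in place, the involution compatibility is a one-line check via $\sigma_j(\phi_j(w,v)) = \phi_j(v,w)$, and finite generation of $\nabla$ over $\bk$ is immediate from finite-dimensionality of $\bV_j$ and the corresponding property for $B_j$'s cell modules.
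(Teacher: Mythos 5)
The paper does not actually prove this statement: it is quoted verbatim from Xi \cite[Lemma 3.3]{xi1999partition} and used as a black box, with the proof deferred to Xi and to K\"onig--Xi \cite{KX}. So the comparison here is with that standard ``iterated inflation'' argument, and your outline is exactly that argument: refine the chain $J_0\subset\cdots\subset J_m$ by the ideals $K_{j,\ell}=\bV_j\otimes\bV_j\otimes J_\ell^{(j)}+J_{j-1}$, identify each subquotient with $\nabla\otimes_\bk i(\nabla)$ for $\nabla=\bV_j\otimes_\bk\Theta_{j,\ell}$, and check the involution square using $\sigma_j(\phi_j(w,v))=\phi_j(v,w)$. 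That is the right strategy, and your bookkeeping for the $i$-stability of the $K_{j,\ell}$, the vanishing of the action of lower layers (since $J_{j-1}$ is an ideal contained in $K_{j,\ell-1}$), and the action of the $j$-th layer via the rule $(x\otimes y\otimes b)(u\otimes v\otimes c)\equiv x\otimes v\otimes b\,\phi_j(y,u)\,c$ is correct.

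The genuine gap is at the step you yourself call the heart of the argument, and it is not closed by the sentence you offer for it. You claim that the layers $j'>j$ (and, more generally, arbitrary elements of $A$) act on the subquotient $K_{j,\ell}/K_{j,\ell-1}$ ``only on the left factor \ldots because the $K_{j,\ell}$ are ideals.'' The ideal property only gives that $L_a$ preserves $K_{j,\ell}$ and $K_{j,\ell-1}$; it says nothing about the shape of the induced map on the quotient. The hypotheses constrain $L_a$ on this subquotient only through associativity applied to products with layer-$j$ elements, i.e.\ only on the span of elements of the form $x\otimes v\otimes b\,\phi_j(y,u)\,c$, and this span can be a proper subspace (for instance when $\phi_j$ is degenerate, which the hypotheses allow). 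So the assertion that every $a\in A$ acts in the form $a\cdot(\xi\otimes\eta)=(a\xi)\otimes\eta$ on $\nabla\otimes_\bk i(\nabla)$ --- equivalently, that your candidate $\nabla$ is genuinely a left ideal of $A/K_{j,\ell-1}$, as the cell-ideal definition requires --- does not follow from what you have written; this is precisely where K\"onig--Xi's treatment does real work (using the unit of $A$ and the full axioms of an inflation), and where later authors found it necessary to restate the hypotheses carefully. As written, your proof establishes the $\bk$-module identification and the involution compatibility, but not the $A$-bimodule isomorphism, so the central claim remains unproved. (A minor additional point: you invoke finite-dimensionality of $\bV_j$, which is not among the stated hypotheses, to get $\nabla$ finitely generated over $\bk$.)
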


We will use this lemma to prove the following theorem.

\begin{theorem} \label{THMCellularity}
For any field $\bk$, $n \in \bZ_{\geq 0}$ and $\delta$ an arbitrary parameter $\SB_n(\delta)$ is a cellular algebra.
\end{theorem}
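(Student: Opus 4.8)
The plan is to verify the hypotheses of Lemma \ref{NoBasisCellular} for $\SB_n(\delta)$, following the template that \cite{xi1999partition} uses for the partition algebra and \cite{graham1996cellular} uses for the Brauer algebra. First I would fix an involution $i$ on $\SB_n(\delta)$: on a spin-Brauer diagram $\Omega = (U,U',\Gamma,\Gamma',f)$, let $i(\Omega)$ be the diagram obtained by flipping top and bottom rows, i.e. swap $U \leftrightarrow U'$, $\Gamma \leftrightarrow \Gamma'$, and replace $f$ by $f^{-1}$. One checks directly from the combinatorial multiplication rules in \S\ref{Diagram Algebra} that $i(\Omega_2\Omega_1) = i(\Omega_1)i(\Omega_2)$ — the vertical flip reverses the stacking order, the closed-circuit count is symmetric under the flip, and the spin-Clifford tie-breaking in steps \eqref{Mult8}–\eqref{Mult9} is compatible with the flip (the "swap smallest pairs across rows first" convention is symmetric). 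Since $i$ fixes each diagram up to the flip and $i^2 = \mathrm{id}$, this is an algebra anti-involution.

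Next I would set up the cell decomposition indexed by a "propagation" statistic. For a spin-Brauer diagram, let $\ell$ be the number of through strings plus the number of isolated vertices in the top row (equivalently, $|T \setminus (U \cup V(\Gamma))| + |U|$); call this the \emph{rank} of the diagram. Note that under multiplication the rank is non-increasing (arcs and isolated-vertex absorptions in a product can only decrease or preserve it — this is exactly what Lemmas \ref{CS2}–\ref{TSLemma} track on the map side, but it can also be read off combinatorially). Let $J_t$ be the span of all spin-Brauer diagrams of rank $\leq t$; these are two-sided ideals fixed by $i$, giving the required chain. For the associated graded piece $\bV_j \otimes_\bk \bV_j \otimes_\bk B_j$ in rank $j$: a rank-$j$ diagram is determined by (a) the data of which top vertices are isolated vs. matched-in-arcs and the arc pattern $\Gamma$ together with the ordered list of isolated-vertex labels in $U$ — call this a "half-diagram" $\delta_{\mathrm{top}}$; (b) the analogous half-diagram $\delta_{\mathrm{bot}}$ on the bottom; and (c) a bijection $f$ between the $j$ "free" top points and the $j$ "free" bottom points, together with how isolated-vertex labels are interleaved, which after normalizing is governed by a permutation in $\Sigma_j$ but \emph{with signs} coming from the spin-Clifford relation. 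So I expect $\bV_j$ to be the free $\bk$-module on half-diagrams of rank $j$, and $B_j = \bk\Sigma_j$ — but one must check whether the middle algebra is actually the group algebra $\bk\Sigma_j$ or a signed/twisted version; the spin-Clifford relation $\Omega + \Omega' = 2\Omega''$ is the subtlety here, and modulo $J_{j-1}$ (where $\Omega''$ lives, since tying two isolated vertices together drops the rank) it degenerates to $\Omega' \equiv -\Omega$, so the "middle" multiplication is that of $\bk\Sigma_j$ with a sign twist — which is still isomorphic to $\bk\Sigma_j$ (indeed cellular, via the Murphy/Kazhdan–Lusztig cell structure, cf. \cite{graham1996cellular}).

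Then I would produce the bilinear form $\phi_j \colon \bV_j \otimes_\bk \bV_j \to B_j$ demanded by the lemma: given two half-diagrams $y, u$ (a bottom half and a top half of rank $j$), stack them, count closed circuits to get a power of $\delta$, record the induced permutation-with-sign in $\Sigma_j$ if the stack still has rank $j$, and set $\phi_j(y,u) = 0$ otherwise — this is forced to land in $B_j$ modulo $J_{j-1}$ by the rank-drop observation. The symmetry $\sigma_j(\phi_j(w,v)) = \phi_j(v,w)$ follows because flipping the stack is $i$, and the factorization $(x\otimes y \otimes b)(u \otimes v \otimes c) \equiv x \otimes v \otimes b\,\phi_j(y,u)\,c \pmod{J_{j-1}}$ is exactly the assertion that in the product $\Omega_2\Omega_1$ the new top half is inherited from $\Omega_1$, the new bottom half from $\Omega_2$, and the "inside homomorphism" contributes the middle factor — which is precisely what Lemmas \ref{CS2}–\ref{CliffEquivariant} establish (or, purely combinatorially, what steps \eqref{Mult2}–\eqref{Mult9} say). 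The remaining hypothesis, that $\bV_j \otimes \bV_j \otimes J^{(j)}_\ell + J_{j-1}$ is a two-sided ideal, follows from the rank-non-increasing property together with the fact that the $J^{(j)}_\ell$ are ideals in $\bk\Sigma_j$.

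The main obstacle will be bookkeeping the \emph{signs and the interleaving of isolated-vertex labels} correctly — i.e., pinning down precisely what $\bV_j$ and $B_j$ are and checking that the spin-Clifford relation, once one passes to $\SB_n(\delta)/J_{j-1}$, collapses to an honest (possibly sign-twisted) $\bk\Sigma_j$-structure rather than something more exotic. Concretely one has to confirm: (i) that the tie-breaking conventions in steps \eqref{Mult8}–\eqref{Mult9} are exactly what is needed for the product formula to hold on the nose modulo $J_{j-1}$ (no residual ambiguity); and (ii) that a sign-twisted symmetric group algebra is still cellular with an $i$-compatible cell chain, so that the induction feeding into Lemma \ref{NoBasisCellular} closes. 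Once those are in hand, Lemma \ref{NoBasisCellular} applies verbatim and delivers cellularity of $\SB_n(\delta)$ over any field $\bk$.
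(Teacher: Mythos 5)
Your overall strategy (verify the hypotheses of Lemma \ref{NoBasisCellular} with the row-flip involution, a filtration by a propagation statistic, layers $\bV_j\otimes\bV_j\otimes\bk\Sigma_j$, and a bilinear form given by stacking half-diagrams) is the right template, but the filtration statistic you chose breaks the argument. You define the rank of a diagram as (number of through strings) $+\ |U|$, i.e.\ the number of non-arc vertices in the \emph{top} row. This is neither symmetric under your involution $i$ (which exchanges $U$ and $U'$) nor does it give two-sided ideals. Concretely, for $n=2$ let $\Omega_1$ have two isolated vertices on top and an arc on the bottom, and let $\Omega_2=i(\Omega_1)$ have an arc on top and two isolated vertices on the bottom. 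Then your rank of $\Omega_2$ is $0$ while your rank of $\Omega_1$ is $2$ (so the chain is not fixed by $i$), and the product $\Omega_2\Omega_1=\delta\cdot(\text{all four vertices isolated})$ has rank $2$, so your $J_0$ is not a right ideal. The same example shows the symmetric decomposition into pieces $\bV_j\otimes\bV_j\otimes B_j$ cannot be indexed by your rank at all, since the top and bottom halves of a single diagram generally have different numbers of isolated vertices and hence different ``ranks.'' The correct statistic is the number of through strings alone: it is common to both rows, it is $i$-invariant, and every term in a product has at most $\min(\ell_1,\ell_2)$ through strings (new through strings created by the spin-Clifford relation always tie a former through-string origin of $\Omega_1$ to one of $\Omega_2$), which is what makes $J_t$ a two-sided ideal; this is exactly the filtration the paper uses, with $\bV_\ell$ spanned by pairs $(\rho,S)$ recording the arc/isolated pattern of one row together with the $\ell$ through-string origins.

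A second, related error is your treatment of the spin-Clifford relation modulo the filtration. You argue that the tie term $\Omega''$ drops the filtration degree, so that modulo $J_{j-1}$ the relation degenerates to $\Omega'\equiv-\Omega$ and the middle algebra is a sign-twisted $\bk\Sigma_j$. Under your own rank an across-row tie does not drop the rank (it trades one top isolated vertex for one through string), and under the correct through-string filtration it \emph{raises} the degree: tying an absorbed top-row isolated vertex to an absorbed bottom-row isolated vertex is precisely how the unique leading term of $\Omega_2\Omega_1$ with $\ell$ through strings is produced, while the sign-swapped branches land in $J_{\ell-1}$. Thus modulo $J_{\ell-1}$ each across-row inversion contributes the tie term with a factor of $2$, the powers of $2$ (together with $\delta$ for closed circuits and vanishing conditions when through strings are lost) go into the bilinear form $\phi_\ell$, and the graded middle algebra is the honest group algebra $\bk\Sigma_\ell$, not a sign twist. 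Without fixing both points --- the filtration statistic and the bookkeeping of the spin-Clifford relation in the leading term --- the hypotheses of Lemma \ref{NoBasisCellular} are not met, so the proposed proof does not go through as written.
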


\begin{proof}
The proof will consist of many parts broken into lemmas and propositions showing $\SB_n(\delta)$ satisfies all the hypotheses of Lemma \ref{NoBasisCellular}.

We follow a general framework used in \cite{xi1999partition}.  We begin by introducing some notation.  For $n$ a positive integer, we denote by $E_n$:
\begin{align*}
E_n \; := \; &\bigg{\{}\rho = \left((\rho_1),(\rho_2),\dots,(\rho_k)\right) \; | \; \emptyset \not= (\rho_i) \subset \{1,\dots,n\},\bigcup_{i=1}^k (\rho_i) = \{1,\dots,n\},\\
& \; (\rho_i) \cap (\rho_j) = \emptyset \; (i \not= j), |(\rho_i)| \leq 2, k \in \bN \bigg{\}}.
\end{align*}
Consider the following example,
\begin{example}
\
\[
E_3 = \{(12)(3), (13)(2),(23)(1),(1)(2)(3)\}.
\]
Notice, we force each partition $\rho_i$ to contain at most two elements, so the partition $(123)$ is excluded. 
\end{example}

Define a counting function $m_1 \colon E_n \to \bN$ that counts the number of parts of a partition $\rho$ that have size $1$. So if $\rho = ((\rho_1)\dots(\rho_k))$, $m_1(\rho)$ is the number of $\rho_i$ such that $|\rho_i| = 1$. 

We now construct a vector space that will encode spin-Brauer diagrams.  Define a vector space $\bV_\ell$ with basis given by the set
\begin{equation} \label{VBasis}
\cS_\ell = \{(\rho,S) \; | \; \rho \in E_\ell, \; m_1(\rho) \geq \ell, \; S \subset \rho, \; |S| = m_1(S) = \ell\}.
\end{equation}
Basis elements are pairs $(\rho,S)$ with $\rho$ a partition of $n$ into pieces of size $1$ or $2$, such that there are at least $\ell$ partition elements of size one. Given such a partition $\rho$, we pair it with a subpartition $S$ consisting only of $(\rho_i)$ such that $|\rho_i| = 1$.

For $\Omega \in \cB(\SB_n(\delta))$, let $\ell$ be the number of through strings in $\Omega$.  We will associate to $\Omega$ a basis element $(x,S) \otimes (y,T) \otimes \sigma \in \bV_\ell \otimes \bV_\ell \otimes \bk\Sigma_\ell$ and show this association has an inverse and hence is a bijection.  

Label the vertices in the top and bottom rows of $\Omega$ as $\{1,\dots,n\}$.  The top and bottom rows of $\Omega$ partition $\{1,\dots,n\}$ into subsets of size $1$ or $2$ in a natural way. Every isolated vertex and originating vertex of a through string corresponds to a subset of size $1$.  Arcs correspond to subsets of size $2$. 

Let $x$ be the partition of the top row of $\Omega$ and $y$ the partition of the bottom row.  Next, put $S$ and $T$ as the subsets of vertex numbers in the top and bottom row respectively where through strings originate. Hence, $|S| = |T| = \ell$, i.e. $(x,S) \otimes (y,T) \in \bV_\ell \otimes \bV_\ell$.

Now $\sigma$ encodes how the through strings connect. Suppose 
\[
S = (S_1,S_2,\dots,S_\ell),  \qquad T = (T_1,T_2,\dots,T_\ell).
\]
Let $f$ be the bijection in the spin datum of $\Omega$. Put $\sigma$ as the permutation induced by $f$.  This gives a well defined element in $\bk\Sigma_\ell$. We then encode $\Omega$ as $(x,S) \otimes (y,T) \otimes \sigma \in V_\ell \otimes V_\ell \otimes \bk\Sigma_\ell$.

Conversely, if $(x,S) \otimes (y,T) \otimes \sigma \in \bV_\ell \otimes \bV_\ell \otimes \bk \Sigma_\ell$, we construct a spin-Brauer diagram $\Omega'$.  Let $U$ consist of all $(x_i) \in x\setminus S$ of size one.  Similarly, let $U'$ be all $(y_i) \in y \setminus T$ of size one.  Put $\Gamma$ as all $(x_i) \in x$ with $|x_i| = 2$.  Define $\Gamma'$ similarly.  The bijection $f$ is then induced by the permutation $\sigma$ on $S = x \setminus (U \cup V(\Gamma)) \to y \setminus (U' \cup V(\Gamma')) = T$.  Here $f$ sends the element $S_i \in S$ to $T_{(\sigma(i))} \in T$. This gives us a well defined spin datum and hence spin-Brauer diagram $\Omega' \in \cB(\SB_n(\delta))$. 

\begin{lemma} \label{encodingdiagrams}
This construction gives a bijective correspondence between $\Omega \in \cB(\SB_n(\delta))$ and the basis for $\bV_\ell \otimes \bV_\ell \otimes \bk\Sigma_\ell$ described in \eqref{VBasis}, where $\ell \in \{0,1,\dots,n\}$.
\end{lemma}

\begin{proof}
This construction is clearly invertible.
\end{proof}

\begin{remark}
As was pointed out by a reviewer, these constructions are not only invertible but also inverse to each other.
\end{remark}

Another piece of the cell-datum necessary to prove cellularity is an involution $i \colon \SB_n(\delta) \to \SB_n(\delta)$.  Given a spin-Brauer diagram $\Omega \in \cB(\SB_n(\delta))$ with spin-datum $(U,U',\Gamma,\Gamma',f)$ define the {\bf spin involution} $i$ as
\[
i(\Omega) = (U',U,\Gamma',\Gamma,f^{-1}).
\]
With the total order on $U'$ and $U$ reversed.  Extend $i$ by linearity to all of $\SB_n(\delta)$.  We leave it to the reader to verify that this is a well-defined element of $\SB_n(\delta)$.
\begin{lemma}
The linear map $i$ is an anti-automorphism of $\SB_n(\delta)$ with $i^2 = id$.
\end{lemma}

\begin{proof}
Linearity of the map is clear from the definition. Furthermore, by construction $i^2 = id$. It remains to check that $i(\Omega_1 \Omega_2) = i(\Omega_2) i(\Omega_1)$ on basis elements. Linearity will then imply the result in general.  This follows immediately from the realization of $\Omega_1$ and $\Omega_2$ as diagrams. The map $i$ exchanges the rows.  When realized as diagrams, it is clear the following operations are equivalent:
\begin{itemize}
\item Exchange the rows of both diagrams, swap which one is on top and take the product.
\item Take the product and exchange the rows.
\end{itemize}
To elaborate on this, clearly all through strings will remain the same. When we swap rows, and change which diagram is on top and then take the product we will end up with all the same closed circuits, everything will just be upside down. So after we apply all the Clifford relations, we will still get all of the same diagrams as in $\Omega_1 \Omega_2$ just with interchanged rows. So when we apply $i$ to $\Omega_1 \Omega_2$ we get the same diagrams.
\end{proof}

We now define a bilinear form $\phi_\ell: V_\ell \otimes V_\ell \to k\Sigma_\ell$.  Let $(\rho,S) \in \cS_\ell$ with $S = \{S_1,S_2,\cdots,S_\ell\}$ and $S_1 < S_2 < \cdots < S_\ell$.  Given $\mu \in E_n$ and $\nu \in E_m$ we define $\mu \cdot \nu$ to be the smallest partition created by merging all parts of $\mu$ and $\nu$ with common elements. For example,
\begin{example}
$\mu = \{(13),(2),(45)\}$ and $\nu = \{(12),(3),(4),(5)\}$, then $\mu \cdot \nu = \{(123),(45)\}$.  
\end{example}
Given a partition $\mu \in E_n$ let ${\rm sing}(\mu)$ be all the components of the partition with size $1$.  We call these the {\bf singletons} in the partition. 

Given $(x,S),(y,T) \in V_\ell$, consider 
\[
{\rm sing}(x) \setminus S = \{\gamma_1,\dots,\gamma_k\} \qquad \text{and} \qquad {\rm sing}(y) \setminus T = \{\gamma_{k+1},\dots,\gamma_{k+m}\}.
\]
Here we place $\gamma_i$ in the natural order corresponding to the elements they represent in the partition.  These are the lists of isolated vertices. Define $\Gamma_S$ to be the ordered set of $\gamma_j \in \{\gamma_1,\dots,\gamma_{k+m}\}$ such that there is some $1 \leq i,j \leq \ell$ where $S_i$ and $\gamma_j$ are in a component of $x \cdot y$. We define $\Gamma_T$ similarly.

Define $\beta(\Gamma_S,\Gamma_T)$ as the minimal number of pairs $(i+1,i)$ such that after inductively removing $\gamma_{i+1}$ from $\Gamma_S$ and $\gamma_{i}$ from $\Gamma_T$ and reindexing the $\gamma_j$, the remaining elements in $\Gamma_S$ are all less than the remaining elements of $\Gamma_T$. For example,

\begin{example}
If $\Gamma_S = \{\gamma_1,\gamma_2,\gamma_5,\gamma_6\}$ and $\Gamma_T = \{\gamma_3,\gamma_4,\gamma_7\}$, then $\beta(\Gamma_S,\Gamma_T) = 2$. Indeed, we first remove $(5,4)$ then after reindexing we have the sets $\{\gamma_1,\gamma_2,\gamma_4\}$ and $\{\gamma_3,\gamma_5\}$.  We remove $(4,3)$ and the order is correct.
\end{example}

The choice of pairs we remove is always uniquely determined. This gives the number of isolated vertices we need to swap across rows using the Spin-Clifford relation in our diagram multiplication. 

Finally, let ${\rm Cr}((x,S),(y,T))$ be the number of pairs $(i,j)$ so that $S_i$ and $T_j$ are contained in a component of $x \cdot y$. This component would have to be unique. This counts the number of through strings that connect in our two diagrams.

%First let $\alpha: V_\ell \to \Z$ be a function that counts the number of singleton entries in $(x,S)$. In other words, $\alpha((x;S))$ is the number of singleton entries in $x$ minus $|S|$.  For example
%\[
%\alpha(((1),(23),(45),(6); (6))) = 1
%\]
%This gives the number of isolated vertices in the row of our diagram corresponding to $(x;S)$.  
\begin{definition} \label{phi}
Define a map $\phi_\ell: \bV_\ell \otimes \bV_\ell \to \bk \Sigma_\ell$ by letting $\phi_\ell((x,S) \otimes (y,T))$ be zero if any of the following occur,
\begin{enumerate}[(1)]
\item  \label{Cond1}
There exists some $i,j$ with $1 \leq i,j \leq \ell$ and $i \not= j$ such that there is a part of $x \cdot y$ containing both $S_i$ and $S_j$.  Or dually, if there is a part of $x \cdot y$ containing both $T_i$ and $T_j$. %We lost a through string and got a spin contraction or immersion here
\item \label{Cond2}
$|\Gamma_S| \not= |\Gamma_T|$. %This implies some through strings died.
\item \label{Cond3}
${\rm Cr}((x,S),(y,T)) + |\Gamma_S| \not= \ell$ or equivalently ${\rm Cr}((x,S),(y,T)) + |\Gamma_T| \not= \ell$. %Not enough possible through strings
\item \label{Cond4}
$\beta(\Gamma_S,\Gamma_T) \not= |\Gamma_S|$ or $\beta(\Gamma_S,\Gamma_T) \not= |\Gamma_T|$. Equivalently, ${\rm Cr}((x,S),(y,T)) + \beta(\Gamma_S,\Gamma_T) \not= \ell$.  %This sum gives us the precise number of through strings, so it needs to equal \ell. This is equivalent to |\Gamma_S| = |S| - Cr = |T| - Cr = |\Gamma_T| = \beta(\Gamma_S,\Gamma_T).  i.e. all the remaining original through strings are involved and swapped.
\end{enumerate}

Otherwise, let $\phi_\ell$ be the following element of $k\Sigma_{\ell}$. First, scale by $\delta$ for each component of $x \cdot y$ that does not contain any element from $S$ or $T$.  These elements correspond to closed circuits.  Next, scale by $2^{|\Gamma_S|} = 2^{|\Gamma_T|}$.  This accounts for the factor of two in the spin-Clifford relation.

Then, if there exists a component of $x\cdot y$ containing $S_i$ and $T_j$ let our permutation in $\Sigma_{\ell}$ send $i$ to $j$. Remove $S_i$ from $S$ and $T_j$ from $T$.

Since, we are assuming $\phi$ is nonzero, (\ref{Cond2}) and (\ref{Cond3}) imply the following
\begin{itemize}
\item Every remaining element of $S$ corresponds to some $\gamma_i \in \Gamma_S$.
\item Every remaining element of $T$ corresponds to some $\gamma_j \in \Gamma_T$. 
\end{itemize}
Indeed, if one of the remaining elements of $S$ does not correspond to an element of $\Gamma_S$ this means ${\rm Cr}((x,S),(y,T)) + |\Gamma_S| < \ell$. Similarly for $T$.

As in the definition of $\beta$, inductively remove pairs $(i+1,i)$ such that after removing $\gamma_{i+1}$ from $\Gamma_S$ and $\gamma_i$ from $\Gamma_T$ and reindexing, all the elements of $\Gamma_S$ are less than the elements of $\Gamma_T$.  During this process, suppose we remove the pair $(i+1,i)$. If $S_k$ corresponds to $\gamma_{i+1}$ and $T_m$ corresponds to $\gamma_{i}$, then our permutation sends $k$ to $m$. 

By (\ref{Cond4}) every element of $\Gamma_S$ will be less than the elements in $\Gamma_T$. Indeed, we need $\beta(\Gamma_S,\Gamma_T) = |\Gamma_S| = |\Gamma_T|$. Whence, this process gives a pairing of all the remaining elements of $S$ and $T$, i.e. a permutation in $k\Sigma_\ell$. Extend this map by linearity to $V_\ell \otimes V_\ell$.  
\end{definition}

As an example consider

\begin{example}
Let $\ell = 3$.  If $x = \{(1)(2)(3)(4)(5)(6)(78)\}$, $S = \{(1)(4)(6)\}$ and $y = \{(13)(2)(4)(5)(67)(8)\}$, $T = \{(2)(5)(8)\}$.  Then we have
\[
x \cdot y = \{(13)(2)(4)(5)(678)\}.
\]
We notice ${\rm Cr}((x,S),(y,T)) = 1$ because $S_3$ and $T_3$ are both in a component of $x \cdot y$.

Furthermore, ${\rm sing}(x) \setminus S = \{(2)(3)(5)\}$ and ${\rm sing}(y) \setminus T = \{(4)\}$.  With the notation we have been using, we say $\{(2)(3)(5)\} = \{\gamma_1,\gamma_2,\gamma_3\}$ and $\{(4)\} = \{\gamma_4\}$.  Now proceeding by definition we construct $\Gamma_S$ and $\Gamma_T$.  We see $(4) = S_2 = \gamma_4 \in x \cdot y$.  Also, $(1) = S_1$ and $\gamma_2$ are in the element $(13) \in x \cdot y$.  Hence $\Gamma_S = \{\gamma_2,\gamma_4\}$.

Next, we see $(2) = T_1= \gamma_1$ and $(5) = T_2 = \gamma_3$ are also both in $x \cdot y$.  Hence $\Gamma_T = \{\gamma_1,\gamma_3\}$.  This implies $\beta(\Gamma_S,\Gamma_T) = 2$ as we must remove $\gamma_2$ and $\gamma_1$ as well as $\gamma_3$ and $\gamma_4$ for $\Gamma_S$ to be less than $\Gamma_T$. It is important to remember we can only remove consecutively indexed elements.

We check that the bilinear form is nonzero.  The first condition does not occur.  $|\Gamma_S| = |\Gamma_T| = \beta(\Gamma_S,\Gamma_T) = 2$ and ${\rm Cr}((x,S),(y,T)) + \beta(\Gamma_S,\Gamma_T)  = 3 = \ell$.

There are no closed circuits because every element of $x \cdot y$ contains some $S_i$ or $T_j$. So we do not scale by a power of $\delta$. Now we must check what the permutation should be.  When computing the crossing number we saw that both $S_3$ and $T_3$ were in the component $(678)$ of $x \cdot y$.  Hence our permutation will fix $3$. 

To discover the final part of the permutation when finding $\beta(\Gamma_S,\Gamma_T)$ we had to remove both the pairs $(\gamma_2,\gamma_1), (\gamma_4, \gamma_3) \in \Gamma_S \times \Gamma_T$.  We saw that $S_1$ corresponds to $\gamma_2$ and $T_1$ corresponds to $\gamma_1$. Accordingly, our permutation fixes $1$.  Similarly, $S_2$ corresponds to $\gamma_4$ and $T_2$ corresponds to $\gamma_3$ so our permutation fixes $2$.  In conclusion, the resulting permutation is the identity.  So in this example $\phi((x,S),(y,T)) = 2^2 \cdot id \in k\Sigma_3$.
\end{example}

\begin{remark} \label{bilinearform}
The map $\phi_\ell: \bV_\ell \otimes_\bk \bV_\ell \to \bk\Sigma_\ell$ from Definition \ref{phi} is a bilinear form.
\end{remark}

We wish to show that multiplication of two diagrams with $\ell$ through strings is encoded by $\phi_\ell$ $\pmod{J_{\ell-1}}$ where $\phi_\ell$ is defined in Definition \ref{phi}.  Before we can prove this, we need the following result.

\begin{lemma}
$J_t := \sum_{j=0}^t \bV_j \otimes \bV_j \otimes k \Sigma_j$ is an ideal of $\SB_n(\delta)$.
\end{lemma}

\begin{proof}
This is stated in Koike \cite[p. 69]{koike2005spin}. Concretely, this is the ideal of all diagrams with at most $t$ through strings.  The number of through strings only decreases upon multiplication. \qed
\end{proof}

Let $\# \colon \SB_n(\delta) \to \bZ_{\geq 0}$ be the function that maps a sum of spin-Brauer diagram to the maximal number of through strings in the sum. We call $\#(\sum_i \Omega_i)$ the {\bf maximal crossing number}.

\begin{lemma} \label{ModMult}
Let $\Omega_1,\Omega_2 \in \cB(\SB_n(\delta))$.  If $\Omega_1 = (u,R) \otimes (x,S) \otimes \sigma_1 \in \bV_\ell \otimes \bV_\ell \otimes \bk\Sigma_\ell$ and $\Omega_2 = (y,T) \otimes (v,Q) \otimes \sigma_2 \in \bV_\ell \otimes \bV_\ell \otimes \bk\Sigma_\ell$, then
\[
\Omega_2 \Omega_1 = (u,R) \otimes (v,Q) \otimes \sigma_1 \phi_\ell((x,S),(y,T)) \sigma_2,
\]
modulo $J_{\ell-1} = \bigoplus_{j=0}^{\ell-1} \bV_j \otimes \bV_j \otimes \bk \Sigma_j$.
\end{lemma}

\begin{proof}
If $\psi_\ell((x,S),(y,T)) = 0$, then by definition of $\phi_\ell$ we see $\#(\Omega_2\Omega_1) < \ell$.  As in each situation \eqref{Cond1}-\eqref{Cond4} we lose a through string.  Furthermore, these are all the possible ways we could decrease the crossing number. This implies every element in the sum corresponding to $\Omega_2 \Omega_1$ is contained in $J_{\ell-1}$.  

Now assume $\phi_\ell((x,S),(y,T)) = 2^i \delta^j \sigma \in k\Sigma_\ell$ as defined in Definition \ref{phi}. It remains to show $2^i \delta^j (u,R) \otimes (v,Q) \otimes \sigma_1 \sigma \sigma_2$ corresponds to the element $\Omega_2 \Omega_1 \pmod{J_{\ell-1}}$.  

If $\#(\Omega_2\Omega_1) = \ell$ there will only be one diagram in the sum decomposition with $\ell$ through strings. Specifically, the diagram $\Omega$ in the sum resulting from repeatedly applying the spin-Clifford relation to swap isolated vertex indices across rows. All other diagrams will have less than $\ell$ through strings.  Indeed, after we apply the first spin-Clifford relation, the resulting diagram will have $|\Gamma_S| = |\Gamma_T| = \ell - C((x,S),(y,T)) - 1$. Hence, we can create at most $\ell-1$ through strings by applying the spin-Clifford relation.

As all other diagrams in the sum decomposition of $\Omega_2\Omega_1$ will have less than $\ell$ through strings it suffices to show 
\[
\Omega = 2^{|\Gamma_S|}\delta^k (u,R) \otimes (v,Q) \otimes \sigma_1 \sigma \sigma_2.
\]
The scalars are correct because we must apply the spin-Clifford relation exactly $|\Gamma_S|$ times by condition (\ref{Cond4}).  This results in scaling by $2^{|\Gamma_S|}$.  We also know $\Omega$ will be scaled by $\delta^k$ where $k$ is the number of closed circuits.  

Furthermore, $\Omega$ will have the same $(u,S)$ determining its top row and $(v,Q)$ its bottom row.  Indeed, $\Omega$ has $\ell$ through strings, so every through string has to be preserved.  Additionally, in the multiplication of $\Omega_2 \Omega_1$ we cannot change the originating vertex of a through string. This forces complete preservation of isolated vertices, arcs and through string origins. 

It remains to prove $\sigma_1 \sigma \sigma_2$ is the correct permutation of the through strings where we recall that we compose permutations left to right. First, if we consider a through string such that $S_i,T_j \in x \cdot y$ then by definition $\sigma(i) = j$.  Suppose $S_i$ is connected to $R_m$ and $T_j$ is connected to $Q_t$, i.e. $\sigma_1(m) = i$ and $\sigma_2(j) = t$.  In the final diagram, we need to send $R_m$ to $Q_t$. This clearly occurs as we compose from left to right $m \to i \to j \to t$.

Now consider the part of the diagram consisting of through strings that terminate in an isolated vertex. 
Suppose the through string originating in $R_m$ and connecting to $S_i$ ultimately terminates in an isolated vertex $\gamma_{k+1} \in \Gamma_S$.  Then by condition (\ref{Cond4}) after reindexing, we can assume without loss of generality that $\gamma_k \in \Gamma_T$.  This corresponds to some $T_j$ which is connected to $Q_t$. After applying the spin-Clifford relation, we create a through string between $R_m$ and $Q_t$.  Hence our permutation must send $m$ to $t$.  This is the case as $\sigma_1(m) = i$, then $\sigma(i) = j$ by definition and $\sigma_2(j) = t$.  We continue applying the spin-Clifford relation to generate all the other through strings in $\Omega$. By the same reasoning, every through string is correctly encoded by $\sigma_1\sigma\sigma_2$.

This proves $\Omega =  (u,R) \otimes (v,Q) \otimes \sigma_1 \phi_\ell((x,S),(y,T)) \sigma_2 \pmod{J_{\ell-1}}$. \qed
\end{proof}

\begin{lemma}
If $\Omega_1 = (x,S) \otimes (y,T) \otimes \sigma \in \bV_\ell \otimes \bV_\ell \otimes \bk\Sigma_\ell$ then $i(\Omega_1) = (y,T) \otimes (x,S) \otimes \sigma^{-1}$
\end{lemma}

\begin{proof}
This is a consequence of definitions and Lemma \ref{ModMult}. \qed
\end{proof}

\begin{lemma}
Let $\tau \colon \bk \Sigma_\ell \to \bk \Sigma_\ell$ be the involution on $\bk\Sigma_\ell$ defined by $\sigma \mapsto \sigma^{-1}$ for all $\sigma \in \Sigma_\ell$.  Then $\tau(\phi_\ell(v_1,v_2)) = \phi_\ell(v_2,v_1)$ for $v_i \in \bV_\ell$.
\end{lemma}

\begin{proof}
Assume $v_1 = (x,S)$ and $v_2 = (y,T)$. If $\phi_\ell(v_1,v_2) = 0$ then by construction $\phi_\ell(v_2,v_1) = 0$ as well. So assume $\phi_\ell(v_1,v_2) \not= 0$.

If this is the case, we notice that the scalar $\delta^k 2^{|\Gamma_S|}$ does not change when we interchange $S$ and $T$.  Indeed, the size of $\Gamma_S$ and $\Gamma_T$ are preserved.  Furthermore, the number of closed circuits remains the same because $x \cdot y = y \cdot x$. It remains to check that the permutation is inverted.

Let $\sigma$ be the permutation described in the construction of $\phi_\ell$.  If $S_i$ and $T_{\sigma(i)}$ are contained in the same part of $x \cdot y$ then $T_i$ and $S_{\sigma^{-1}(i)}$ are contained in the same part of $y \cdot x = x \cdot y$.  Hence the permutation associated to $\phi_\ell(v_2,v_1)$ is $\sigma^{-1}$ for through strings that connect.

Next we consider through strings that terminate in isolated vertices. Suppose $S_i$ and $T_{\sigma(i)}$ are associated to $\gamma_{k+1}$ and $\gamma_k$.  Without loss of generality, assume $\gamma_k$ is the largest element of $\Gamma_S$. When we apply $i$, the orders of $\Gamma_S$ and $\Gamma_T$ are reversed. That is,
\[
\gamma_{j} \to \gamma_{2k-j+1}.
\]
$S_i$ is now in a component of $x \cdot y$ with the isolated vertex corresponding to $\gamma_{2k-(k+1) + 1} = \gamma_k$.  Similarly, $T_{\sigma(i)}$ is associated to $\gamma_{k+1}$. We remove these elements and inductively apply the above reasoning to see that in general, $T_{\sigma(i)}$ and $S_{i}$ are mapped to each other. That is, the permutation associated to $\phi_\ell(v_2,v_1)$ is described by $\sigma(i) \mapsto i$. This permutation is $\sigma^{-1}$. \qed
\end{proof}

We are now ready to prove the theorem. Put $J_{-1} = 0$, $\Sigma_0 = \{1\}$ and $B_\ell = k \Sigma_\ell$.  Then $\SB_n(\delta)$ has a description
\[
\SB_n(\delta) = \bV_0 \otimes_k \bV_0 \otimes_k B_0 \oplus \cdots \oplus \bV_\ell \otimes_k \bV_\ell \otimes_k B_\ell \cdots \oplus \bV_n \otimes_k \bV_n \otimes_k B_n.
\]
This follows from Lemma \ref{encodingdiagrams}. %Indeed, every basis element can be encoded this way by Lemma \ref{encodingdiagrams}.  Hence given $\Omega \in \SB_n(\delta)$ we can write
%\[
%\Omega = \sum_{i=1}^m \alpha_i \Omega_i
%\]
%\steven{lowercase: }Where $\Omega_i \in \cB(\SB_n(\delta))$ and $\alpha_i \in k$. If we apply Lemma \ref{encodingdiagrams}, we can encode each $\Omega_i$ as $(x_i,S_i) \otimes (y_i,T_i) \otimes \sigma_i \in V_{\ell_i} \otimes V_{\ell_i} \otimes k \Sigma_{\ell_i}$ for some $\ell_i$.  Then $\Omega$ is uniquely realized as the sum
%\[
%\sum_{i=1}^m (x_i,S_i) \otimes (y_i,T_i) \otimes \sigma_i \in \bigoplus_{i=0}^n \bV_i \otimes_k \bV_i \otimes_k B_i.
%\]
%Given such a sum we can realize each basis element as a spin-Brauer diagram and recover an element in $\SB_n(\delta)$. 
Note that $B_r$ is a cellular algebra with respect to the involution $\sigma \mapsto \sigma^{-1}$ for $\sigma \in \Sigma_r$ (see \cite[Proof of Theorem, pg. 107]{xi1999partition}).  By all the Lemmas in this section this description of the spin-Brauer algebra satisfies all the necessary conditions in Lemma \ref{NoBasisCellular}. Hence $\SB_n(\delta)$ is a cellular algebra. \qed
\end{proof}

We extract some immediate consequences of cellularity. 

%\begin{corollary}
%The Brauer algebra ${\rm B}_n(\delta)$ is cellular.
%\end{corollary}
%
%
%This is already a well known fact, but it follows immediately from the cellularity of $\SB_n(\delta)$ because ${\bf B}_n(\delta)$ is a special subalgebra of $\SB_n(\delta)$. The cellular structure on $\SB_n(\delta)$ descends to a cellular structure on ${\bf B}_n(\delta)$. For further dis \qed

\begin{corollary} \label{WeylMods}
The cell modules of $\SB_n(\delta)$ are $\nabla_{\ell}(\lambda) := \bV_\ell \otimes v_\ell \otimes \nabla(\lambda)$ where $\ell \in \{0,1,\dots,n\}$ and $\lambda$ is a partition of $\ell$, $v_\ell$ is a fixed nonzero element of $\bV_\ell$ and $\nabla(\lambda)$ is a cell module of $\bk\Sigma_\ell$.  For $\ell = 0$, we take $\lambda = (0)$ and $\nabla(0) = k$.
\end{corollary}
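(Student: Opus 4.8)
The plan is to read the cell modules off directly from the layered cellular structure built in the proof of Theorem \ref{THMCellularity}. Recall that there we exhibited $\SB_n(\delta) = \bigoplus_{\ell=0}^{n} \bV_\ell \otimes_\bk \bV_\ell \otimes_\bk B_\ell$ with $B_\ell = \bk\Sigma_\ell$, checked the hypotheses of Lemma \ref{NoBasisCellular}, and showed that $J_t = \bigoplus_{\ell \le t} \bV_\ell \otimes_\bk \bV_\ell \otimes_\bk B_\ell$ is the ideal of diagrams with at most $t$ through strings, that the involution $i$ restricts on the $\ell$-th summand to $w \otimes v \otimes b \mapsto v \otimes w \otimes b^{-1}$, and that multiplication is governed modulo $J_{\ell-1}$ by the bilinear form $\phi_\ell$ (Lemma \ref{ModMult}). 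This is precisely the input of \cite[Lemma 3.3]{xi1999partition} (equivalently of König--Xi \cite{KX}), and the cell chain of $\SB_n(\delta)$ that this construction produces is obtained by refining each layer $\bV_\ell \otimes_\bk \bV_\ell \otimes_\bk B_\ell$ using a cell chain of $B_\ell$: the cell module attached to a cell datum $\lambda$ of $B_\ell$ is $\bV_\ell \otimes_\bk v_\ell \otimes_\bk \nabla(\lambda)$ for any fixed $0 \ne v_\ell \in \bV_\ell$, the diagonal copy of $\bV_\ell$ contributing the extra index directions while the second tensor slot is collapsed to the line $\bk v_\ell$. So the first step is simply to invoke this general fact; it is immediate from the bimodule isomorphism $J_\ell/J_{\ell-1} \cong \nabla \otimes_\bk i(\nabla)$ in the definition of a cell ideal, applied with $\nabla = \bV_\ell \otimes_\bk v_\ell \otimes_\bk \nabla(\lambda)$.

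The second step is to identify the cell modules of the input algebras $B_\ell$. For $\ell \ge 1$, the symmetric group algebra $\bk\Sigma_\ell$ over a field is cellular with cell modules the Specht modules $\nabla(\lambda)$, one for each partition $\lambda$ of $\ell$; this is the original example of Graham and Lehrer \cite[\S5]{graham1996cellular}, and it is the cellular structure on $B_\ell$ already used in the proof of Theorem \ref{THMCellularity} (via \cite[Proof of Theorem, p.~107]{xi1999partition}). For $\ell = 0$ the group $\Sigma_0$ is trivial, so $B_0 = \bk$, which is cellular with the single cell module $\bk$; we record this as the partition $\lambda = (0)$ with $\nabla(0) = \bk$, so that $\nabla_0(0) = \bV_0 \otimes_\bk v_0 \otimes_\bk \bk$. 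Feeding these into the layered construction of the previous paragraph yields exactly the list $\nabla_\ell(\lambda) = \bV_\ell \otimes_\bk v_\ell \otimes_\bk \nabla(\lambda)$ for $0 \le \ell \le n$ and $\lambda$ a partition of $\ell$.

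Finally I would note that this list is complete and non-redundant: by construction the index poset of the cellular structure of $\SB_n(\delta)$ is the disjoint union over $\ell \in \{0,\dots,n\}$ of the index posets of the $B_\ell$, with the layers totally ordered by $\ell$ and lower layers smaller, so distinct pairs $(\ell,\lambda)$ give distinct cell modules and every cell module arises this way. One can also record for bookkeeping that the $\SB_n(\delta)$-action on $\nabla_\ell(\lambda)$ is the expected one --- an element lying in a layer $\ell' < \ell$ acts as zero, while an element $(u,R)\otimes(x,S)\otimes\sigma$ of layer $\ell$ acts through $\phi_\ell$ on the $\bV_\ell$-coordinate and through $\sigma$ on $\nabla(\lambda)$, exactly as dictated by Lemma \ref{ModMult} --- but this is forced by the cell-datum axioms and needs no separate verification. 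There is no real obstacle here beyond correctly invoking the layered cell-module construction of \cite{xi1999partition,KX} and matching the indices; the only point deserving a sentence is why the second tensor factor collapses to the line $\bk v_\ell$ rather than remaining all of $\bV_\ell$, and that is exactly the content of the cell-ideal isomorphism $J_\ell/J_{\ell-1} \cong \nabla \otimes_\bk i(\nabla)$ noted above.
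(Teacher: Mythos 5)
Your proposal is correct and follows essentially the same route as the paper: the cell modules are read off as the $\nabla$'s attached to the quotients $J_\ell/J_{\ell-1}$ in the cell chain produced by Lemma \ref{NoBasisCellular} during the proof of Theorem \ref{THMCellularity}, with the cellular structure on $\bk\Sigma_\ell$ supplying the $\nabla(\lambda)$. Your write-up simply spells out the details (the collapse to $\bk v_\ell$, the $\ell = 0$ case, completeness of the indexing) that the paper leaves implicit with its reference to \cite{xi1999partition}.
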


\begin{proof}
This is an immediate consequence of the proof of Theorem \ref{THMCellularity}.  These are precisely the $\nabla$ associated to each $J_\ell/J_{\ell-1}$.  We also refer the reader to \cite{xi1999partition} for further discussion. \qed
\end{proof}

The existence of Weyl-Modules is an immediate consequence of the cellular structure. For a specific definition we refer the reader to \cite[Section 2]{graham1996cellular}.  These modules play a significant role in understanding the representations of a cellular algebra. In particular, we can define a symmetric bilinear form $\Phi_\ell$ on them.  This bilinear form is described using the bilinear form in the cellular datum \eqref{bilinearform}.  It turns out that over a field, non-degeneracy of this bilinear form for each $\ell$ is equivalent to semi-simplicity \cite[Theorem 3.8]{graham1996cellular}.  Furthermore, the $\ell$ for which $\Phi_\ell$ is non-degenerate parametrize all the absolutely irreducible representations of a cellular algebra \cite[Theorem 3.4]{graham1996cellular}.  

This means we can parametrize all the irreducible representations of $\SB_n(\delta)$.  First we must make a definition.

\begin{definition}
Let $a \in \bZ_{\geq 0}$.  A partition $\lambda= (\lambda_1,\dots,\lambda_\ell)$ of $m$ is {\bf $a$-regular} if there is no $\lambda_i = \lambda_{i+1} = \cdots = \lambda_{i+a-1}$ for any $i = 1,\dots, \ell-a$. This just means that every part of $\lambda$ appears at most $a-1$ times when $a>0$. Every partition is $0$-regular.
\end{definition}

\begin{corollary}
Let $\SB_n(\delta)$ be the spin-Brauer diagram algebra for some $n > 1$ over a field $k$ of characteristic $a$ where $a$ can be zero.  If $\delta\not= 0$, then the nonisomorphic irreducible representations of $\SB_n(\delta)$ are parametrized by $\{(m,\lambda) \; | \; 0 \leq m \leq n, \; \lambda \; \text{a $a$-regular partition of} \; m\}$. If $\delta= 0$ then $m = 0$.
\end{corollary}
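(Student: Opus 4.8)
The plan is to extract the irreducible modules directly from the cellular structure established in Theorem~\ref{THMCellularity}, using Graham and Lehrer's criterion. By \cite[Theorem~3.4]{graham1996cellular}, for a cellular algebra over a field the non-isomorphic absolutely irreducible modules are indexed by those cell-module labels for which the canonical symmetric bilinear form on the corresponding cell module is not identically zero. By Corollary~\ref{WeylMods} the cell modules of $\SB_n(\delta)$ are the $\nabla_\ell(\lambda)=\bV_\ell\otimes v_\ell\otimes\nabla(\lambda)$ with $0\le\ell\le n$ and $\lambda\vdash\ell$, where $\nabla(\lambda)$ is the Specht module of $\bk\Sigma_\ell$ equipped with its standard contravariant form $\langle-,-\rangle_\lambda$. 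So the corollary reduces to deciding, for each pair $(\ell,\lambda)$, whether the form $\Phi_\ell$ on $\nabla_\ell(\lambda)$ is nonzero.

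The first real step is to make $\Phi_\ell$ explicit. Unwinding the Xi-type decomposition of Lemma~\ref{NoBasisCellular} together with the multiplication formula of Lemma~\ref{ModMult} (both taking place in $J_\ell/J_{\ell-1}$), one sees that $\Phi_\ell$ is the composite of the bilinear form $\phi_\ell\colon\bV_\ell\otimes\bV_\ell\to\bk\Sigma_\ell$ of Definition~\ref{phi} with $\langle-,-\rangle_\lambda$: up to the usual reindexing,
\[
\Phi_\ell\bigl((x,S)\otimes v_\ell\otimes m,\ (y,T)\otimes v_\ell\otimes m'\bigr)=\bigl\langle m,\ \phi_\ell\bigl((x,S),(y,T)\bigr)\cdot m'\bigr\rangle_\lambda .
\]
Every value of $\phi_\ell$ is either $0$ or of the form $c\,\sigma$ with $\sigma\in\Sigma_\ell$ and $c\in\bk$ a product of a power of $\delta$ and a power of $2$; since $\langle gm,m'\rangle_\lambda=\langle m,g^{-1}m'\rangle_\lambda$ and each $\sigma$ acts invertibly on $\nabla(\lambda)$, this yields the clean dichotomy that $\Phi_\ell\not\equiv 0$ on $\nabla_\ell(\lambda)$ if and only if (i) some pair in $\cS_\ell$ has $\phi_\ell$-value $c\,\sigma$ with $c$ a unit of $\bk$, and (ii) $\langle-,-\rangle_\lambda\not\equiv 0$. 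For (ii) I would cite the classical theorem of James on the contravariant form of Specht modules (see also \cite{graham1996cellular}): $\langle-,-\rangle_\lambda$ is nonzero exactly when $\lambda$ is $a$-regular, $a=\operatorname{char}\bk$, every partition being $0$-regular.

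It remains to settle (i). Evaluating Definition~\ref{phi} on the pair $x=y=(1)(2)\cdots(n)$ with $S=T=\{(1),\dots,(\ell)\}$ gives $\phi_\ell\bigl((x,S),(x,S)\bigr)=\delta^{\,n-\ell}\cdot\mathrm{id}$, the factor $\delta^{\,n-\ell}$ coming from the $n-\ell$ singleton closed circuits outside $S\cup T$, with no crossings and no Clifford swaps. Hence if $\delta\neq 0$ the coefficient is a unit for every $\ell$, (i) always holds, and $\Phi_\ell\not\equiv 0$ on $\nabla_\ell(\lambda)$ precisely when $\lambda$ is $a$-regular; with the reduction above this gives the stated indexing set $\{(m,\lambda)\mid 0\le m\le n,\ \lambda\text{ an }a\text{-regular partition of }m\}$. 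If $\delta=0$, then Definition~\ref{phi} forces $\phi_0\bigl((x,\emptyset),(y,\emptyset)\bigr)$ to equal $\delta$ raised to the positive number of connected components of $x\cdot y$, hence $\phi_0\equiv 0$ and $\Phi_0\equiv 0$; thus the label $m=0$ is no longer present, which is the asserted behaviour at $\delta=0$. (For $1\le m\le n$ one still produces representatives whose product has no closed circuit, so $\phi_m$ attains a unit coefficient and those labels are unaffected.)

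The step I expect to be the genuine obstacle is the explicit identification of $\Phi_\ell$ with the composite $\bigl\langle m,\phi_\ell(\cdot,\cdot)m'\bigr\rangle_\lambda$ above: one has to fix the left/right module conventions in the Graham--Lehrer cell module, track the ``$\bmod J_{\ell-1}$'' truncation of Lemma~\ref{ModMult} carefully, and transport the $a$-regularity criterion for $\bk\Sigma_\ell$ faithfully through the tensor decomposition $\bV_\ell\otimes\bV_\ell\otimes\bk\Sigma_\ell$. After that the remaining input is a citation (James, Graham--Lehrer) plus the single explicit evaluation of $\phi_\ell$.
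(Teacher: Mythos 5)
Your proposal is essentially the paper's own argument: Graham--Lehrer's criterion that the irreducibles are the labels with nonvanishing form, the identification of $\Phi_{\ell,\lambda}$ as the composite of $\phi_\ell$ with the symmetric-group form, the $a$-regularity criterion for Specht forms (the paper cites \cite{DJ} where you cite James), the diagonal evaluation $\phi_\ell((x,S),(x,S))=\delta^{\,n-\ell}\,\mathrm{id}$ giving a unit when $\delta\neq 0$, and the vanishing of $\phi_0$ when $\delta=0$ to kill the $m=0$ label. The only caveat is your final parenthetical for $\delta=0$, $1\le m\le n$: the nonzero values of $\phi_m$ without closed circuits carry coefficients $2^{|\Gamma_S|}$, which are not units in characteristic $2$ (e.g.\ for $n=2$, $m=1$ the Gram matrix is $\begin{pmatrix}\delta&2\\2&\delta\end{pmatrix}$), so that claim is not justified as stated --- though the paper's own proof likewise leaves the $\delta=0$, $m\geq 1$ case untreated.
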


\begin{proof}
We follow \cite{xi1999partition,graham1996cellular}.  From Corollary \ref{WeylMods} and \cite{graham1996cellular} the irreducible representations of $\SB_n(\delta)$ are parametrized by $\{(\ell,\lambda) \; | \; \Phi_{\ell,\lambda} \not= 0\}$. Here $\Phi_{\ell,\lambda}$ is a bilinear form on cell modules defined in \cite[\S 2]{graham1996cellular}. If $\ell \not= 0$ then $\Phi_{\ell,\lambda} \not= 0$ if and only if the corresponding linear form $\Phi_{\lambda}$ for the cellular algebra $k\Sigma_\ell$ is not zero.  Here we use the fact that $\phi_{\ell}((x,S),(x,S)) = \delta^{|x|-\ell} \; {\rm id} \in k\Sigma_\ell$. This implies from the definition of $\Phi_{\ell,\lambda}$ that for any $\ell \not= 0$ and $(x,S) \otimes v_\ell \otimes \nabla(\lambda)$ the bilinear form $\Phi_{\ell,\lambda}((x,S) \otimes v_\ell \otimes \nabla_1(\lambda),(x,S) \otimes v_\ell \otimes \nabla_2(\lambda)) = \delta^{|x|-\ell} \cdot \Phi_\lambda(\nabla_1(\lambda),\nabla_2(\lambda))$ which will be nonzero if and only if $\Phi_\lambda$ is nonzero because $\nabla_1(\lambda)$ and $\nabla_2(\lambda)$ are arbitrary standard modules of $k\Sigma_\ell$.

It follows from \cite[(7.6)]{DJ} that $\Phi_\lambda$ is nonzero if and only if $\lambda$ is a $a$-regular partition of $\ell$. If $m = 0$, then $\Phi_{\ell,\lambda} \not= 0$ if and only if $\delta \not= 0$. \qed
\end{proof}

\section{Further Questions}
Once $\SB_n(\delta)$ is defined separately from the centralizer algebra, it becomes possible to ask many questions about its structure--as seen above for the other diagram algebras.  In particular, we may inquire about 
\begin{itemize}[$\bullet$]
\item For which choices of $\delta$ and $n$ is $\SB_n(\delta)$ semisimple? We note that when $\delta = N$ and $N \geq 2n$ the semi-simplicity is clear because $\SB_n(N)$ is then isomorphic to $\End_{\Pin(N)}(\bV^{\otimes n} \otimes \Delta)$ which is semi-simple by the double centralizer theorem when $N \not= 2$. As with all the other diagram algebras, the interesting cases arise outside of these choices of $\delta$. 
\item When the map in Theorem \ref{TheoremA} is not an isomorphism, i.e. for $N < 2n$, what does the kernel of the map $\SB_n(N) \surjects \End_{\Pin(N)}(\bV^{\otimes n} \otimes \Delta)$ contain? 
\item Following \cite{BEG}, we can also ask which of the properties about $\SB_n(N)$ descend to $\End_{\Pin(N)}(\bV^{\otimes n} \otimes \Delta)$.
\end{itemize}

\end{document}